\newtheoremstyle{th}
{4pt}{5pt}      
{\it}           
{}              
{\bf }          
{:}             
{.5em}          
{}              
\theoremstyle{th} 
\newtheorem{thm}{Theorem}[section]
\newtheorem{defi}[thm]{Definition}
\newtheorem{rmk}[thm]{Remark}
\newtheorem{lemma}[thm]{Lemma}
\newtheorem{prop}[thm]{Proposition}
\newtheorem{fact}[thm]{Fact}
\newtheoremstyle{ex}
{4pt}{5pt}      
{}              
{}              
{\bf }          
{:}             
{.5em}          
{}              
\theoremstyle{ex}
\newcommand{\Z}{\mathbb{Z}}
\newcommand{\R}{\mathbb{R}}
\newcommand{\F}{\mathbb{F}}
\newcommand{\abs}[1]{\ensuremath{\left|#1\right|}}
\date{}
\begin{document}
\author{Rizos Sklinos}
\title{On ampleness and pseudo-Anosov homeomorphisms in the free group}

\maketitle

\begin{abstract}
We use pseudo-Anosov homeomorphisms of surfaces in order to prove that the first order theory of non abelian free groups, 
$T_{fg}$, is $n$-ample for any $n\in\omega$. This result adds to the work of Pillay, 
that proved that $T_{fg}$ is non $CM$-trivial. 
The sequence witnessing ampleness is a sequence of primitive elements in $\F_{\omega}$.

Our result provides an alternative proof to the main result of a preprint by Ould Houcine-Tent \cite{OuldTentAmple}.

We also add an appendix in which we make a few remarks on Sela's paper on imaginaries in torsion 
free hyperbolic groups \cite{SelIm}. In particular we give alternative transparent proofs concerning the non-elimination of certain imaginaries. 
\end{abstract}

\section{Introduction}

The notion of $n$-ampleness, for some natural number $n$, fits in the general context of geometric stability theory. As the definition 
may look artificial or technical, we first give the historical background of its 
development. We start by working in a vector space $V$ and we consider two finite dimensional subspaces $V_1,V_2\leq V$. Then one can see 
that $dim(V_1 + V_2) = dim(V_1) + dim(V_2) - dim(V_1\cap V_2)$, and the point really is that $V_1$ is linearly independent from 
$V_2$ over $V_1\cap V_2$. In an abstract stable theory the notion of linear independence is replaced by forking independence (see Section \ref{2}) and the 
above property gives rise to the notion of $1$-basedness. A stable theory $T$ is {\em $1$-based} if there are no $a$, 
$b$ such that $acl^{eq}(a)\cap acl^{eq}(b)=acl^{eq}(\emptyset)$ and $a$ forks with $b$ over $\emptyset$. 
The notion of $1$-basedness turned out to be very fruitful in model theory and one of the major 
results concerning this notion was the following theorem by Hrushovski-Pillay \cite{WeakNormal}.

\begin{thm}\label{HruPill}
Let $\mathcal{G}$ be a $1$-based stable group. Then every definable set $X\subseteq G^n$ is a Boolean combination
of cosets of almost $\emptyset$-definable subgroups of $G^n$. Moreover $G$ is abelian-by-finite.
\end{thm}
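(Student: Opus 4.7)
The plan is to combine the reformulation of $1$-basedness---namely that $Cb(a/B)\subseteq acl^{eq}(a)$ for every tuple $a$ and every parameter set $B$---with the general stabilizer theory of stable groups. Recall that in any stable group $\mathcal{G}$, every stationary type $p=\tp(a/A)$ on $G^n$ admits a type-definable connected stabilizer $H_p=\Stab^0(p)$, and $p$ is the generic type of the unique coset $aH_p$. Modulo this structure, the task reduces to pinning down the parameters defining $H_p$ and the coset.

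First I would show that $H_p$ is almost $\emptyset$-definable. By $1$-basedness, $Cb(p)\subseteq acl^{eq}(a)$, and since $H_p$ is interdefinable with parameters in $Cb(p)$, we get $H_p$ over $acl^{eq}(a)$. Next I would pick $b\in G$ generic and forking-independent from $a$ over $\emptyset$ and set $a':=ba$; then $a'$ realizes a translate $p'$ of $p$, and the connected stabilizer is translation-invariant, so $H_{p'}=H_p$. A short computation shows $a$ and $a'$ are forking-independent over $\emptyset$, so $1$-basedness yields $acl^{eq}(a)\cap acl^{eq}(a')=acl^{eq}(\emptyset)$, whence $H_p$ is almost $\emptyset$-definable. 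A compactness argument applied to the finitely many generic types lying above a given definable set $X\subseteq G^n$ then upgrades this type-level information to the required Boolean decomposition of $X$ into cosets of almost $\emptyset$-definable subgroups.

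For the ``abelian-by-finite'' clause I would analyze the conjugation action. For each $b\in G$, the graph $\Gamma_b=\{(x,bxb^{-1}):x\in G\}$ is a definable subgroup of $G^2$. Applying the decomposition to $\Gamma_b$, its connected component must be an almost $\emptyset$-definable subgroup of $G^2$, and standard arguments about uniformly $acl^{eq}(\emptyset)$-definable families force this component to be independent of $b$; comparing with $b=1$ shows it must equal the connected diagonal $\{(x,x):x\in G^0\}$. Hence every $b\in G$ centralizes $G^0$, so $G^0\subseteq Z(G)$. Since $G^0$ has finite index in $G$, we conclude that $G$ is central-by-finite, and in particular abelian-by-finite.

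The main technical obstacle is the descent in the second paragraph: one must verify carefully that the translate $a':=ba$ is genuinely forking-independent from $a$ over $\emptyset$ and that the connected stabilizer is preserved under such translation, so that $1$-basedness can be leveraged to drop all the way to $acl^{eq}(\emptyset)$. Once this is in place, the remainder follows from compactness and the general structure theory of stable groups.
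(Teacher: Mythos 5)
This theorem is not proved in the paper: it is quoted as historical background from Hrushovski--Pillay \cite{WeakNormal}, so there is no ``paper's own proof'' to compare against. Your sketch is, in broad strokes, the standard Hrushovski--Pillay argument (stabilizers of types plus $1$-basedness for the coset decomposition; graphs of inner automorphisms for the abelian-by-finite clause), but two points deserve flagging.

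First, a mischaracterization of where $1$-basedness enters: the equality $acl^{eq}(a)\cap acl^{eq}(a')=acl^{eq}(\emptyset)$ already follows, in any stable theory, from $a\mathop{\smile\hskip-0.9em^\frown}_\emptyset a'$ (an element algebraic over each of two independent tuples is algebraic over $\emptyset$). The genuine use of $1$-basedness is one step earlier, in placing $Cb(p)$ --- and hence a canonical base for $H_p$ --- inside $acl^{eq}(a)$ and $acl^{eq}(a')$; it is worth making that explicit so the descent to $acl^{eq}(\emptyset)$ is carried by the canonical base of the subgroup, not by a vague ``defined over both.''

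Second, the abelian-by-finite argument is under-justified at the key step. That $\Gamma_b^0$ is almost $\emptyset$-definable makes the family $\{\Gamma_b^0 : b\in G\}$ bounded, but ``independent of $b$'' and ``equal to the connected diagonal'' does not follow just by comparing with $b=1$: the $\Gamma_b^0$ are a priori only conjugate to one another, and $G$ need not be connected. What the boundedness actually gives is that the fibers of $b\mapsto \Gamma_b^0$ (cosets of a suitable centralizer) have bounded, hence finite, index once one works with a \emph{definable} centralizer; one then deduces that a definable finite-index subgroup centralizes a definable finite-index subgroup, which is what yields abelian-by-finite. Relatedly, your last sentence slips: $G^0$ has bounded index, not automatically finite index, so one should pass to the definable subgroup $Z(G)$ (definable, bounded index, hence finite index) rather than invoking $[G:G^0]<\infty$ directly. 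With those repairs the outline matches the classical proof.
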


On the other hand, Hrushovski's seminal work in refuting Zilber's trichotomy conjecture (see \cite{HrushovskiSMSet})  
produced ``new'' strongly minimal sets that had an interesting property. Hrushovski isolated this property and 
called it $CM$-triviality (for Cohen-Macaulay). 
A stable theory $T$ is {\em $CM$-trivial} if there are no $a, b, c$ such that 
$a$ forks with $c$ over $\emptyset$, $a$ is independent from $c$ over $b$, $acl^{eq}(a)\cap acl^{eq}(b)=acl^{eq}(\emptyset)$ 
and finally $acl^{eq}(a,b)\cap acl^{eq}(a,c)=acl^{eq}(a)$. A kind of an analogue to the moreover statement of 
the above theorem has been proved by Pillay in \cite{PillFMR}.  

\begin{thm} 
A $CM$-trivial group of finite Morley rank is nilpotent-by-finite.
\end{thm}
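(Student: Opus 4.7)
My plan is to argue by contradiction, considering a counterexample $G$ of minimal Morley rank --- a $CM$-trivial group of finite Morley rank that is not nilpotent-by-finite. The first reductions are standard: the connected component $G^{0}$ has finite index in $G$ and is itself $CM$-trivial and not nilpotent-by-finite, so one may assume $G$ is connected. If the centre $Z(G)$ is infinite, then $G/Z(G)$ has strictly smaller Morley rank and inherits $CM$-triviality (a property formulated purely in terms of $\mathrm{acl}^{eq}$, hence preserved under interpretation); by the inductive hypothesis it is nilpotent-by-finite, and consequently $G$ is too, a contradiction. So one may assume $G$ is connected with finite, and after a further quotient trivial, centre.

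The task is then to exhibit a triple $(a,b,c)$ in $G^{eq}$ witnessing the failure of $CM$-triviality. The guiding idea is that Theorem~\ref{HruPill} shows $1$-basedness forces \emph{abelian}-by-finite, while $CM$-triviality is one level weaker in the lattice of geometric stability notions, and should accordingly allow one extra level of non-commutativity, namely \emph{nilpotent}-by-finite. Concretely, I would use the failure of nilpotent-by-finiteness, together with the commutator map and the conjugation action of $G$ on itself, to produce a definable section of $G$ that is not abelian-by-finite. Applying Theorem~\ref{HruPill} contrapositively to this section yields a pair of elements exhibiting non-$1$-basedness, from which one extracts a canonical base lying outside $\mathrm{acl}^{eq}$ of its natural parameters; lifting this witness back to $G^{eq}$ and combining it with a suitable third parameter should provide the desired triple $(a,b,c)$ with $\mathrm{acl}^{eq}(a,b) \cap \mathrm{acl}^{eq}(a,c)$ strictly larger than $\mathrm{acl}^{eq}(a)$.

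The main obstacle lies precisely in this last construction. Arranging simultaneously that $a \not\downarrow_{\emptyset} c$, $a \downarrow_{b} c$, $\mathrm{acl}^{eq}(a) \cap \mathrm{acl}^{eq}(b) = \mathrm{acl}^{eq}(\emptyset)$, while ensuring that the intersection $\mathrm{acl}^{eq}(a,b) \cap \mathrm{acl}^{eq}(a,c)$ genuinely exceeds $\mathrm{acl}^{eq}(a)$, demands a delicate interplay between the group operation, conjugation, and canonical bases. This is where Zilber's indecomposability theorem, the stabilizer theorem for groups of finite Morley rank, and the structure of generic types enter the proof. The argument should then succeed by matching each step of the $\mathrm{acl}^{eq}$-calculus with an honest group-theoretic statement about cosets of definable subgroups inside the relevant section, and then appealing to the minimality of $G$ to derive the contradiction.
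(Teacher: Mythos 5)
The paper does not prove this theorem; it is a result of Pillay, cited from \cite{PillFMR}, and stated here purely as motivation for the ampleness hierarchy. There is therefore no internal proof to compare your argument against, and I evaluate it on its own terms.

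Your proposal is an outline whose central step is missing rather than a proof. The preliminary reductions---passing to the connected component, then quotienting by an infinite centre and invoking both the preservation of CM-triviality under interpretable quotients and the fact that $G$ is nilpotent-by-finite whenever $G/Z(G)$ is---are sound and standard. But the entire mathematical content of the theorem lies in the step you yourself label ``the main obstacle,'' and for that step you offer no argument, only a list of tools that might bear on it. In particular, applying Theorem~\ref{HruPill} contrapositively to a non-abelian-by-finite definable section only yields a pair $(a,b)$ witnessing non-$1$-basedness; there is no general procedure for ``combining'' such a pair ``with a suitable third parameter'' to produce a triple satisfying all four conditions of Definition~\ref{Ample} with $n=2$, and filling this gap is precisely the work of the proof. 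You have also stated the target condition backwards: to witness $2$-ampleness one must arrange $acl^{eq}(a,b)\cap acl^{eq}(a,c)=acl^{eq}(a)$, not that this intersection be ``strictly larger than $acl^{eq}(a)$,'' which would destroy the configuration. Finally, the strategy is not the one Pillay uses: his argument does not attempt to construct an ample triple directly from the group operation, but instead shows that an infinite field of finite Morley rank is not CM-trivial, and then, via structural results about groups of finite Morley rank (in particular Zilber's theorem that a connected solvable non-nilpotent group of finite Morley rank interprets an algebraically closed field), deduces that a group of finite Morley rank which is not nilpotent-by-finite interprets such a field, contradicting CM-triviality without ever exhibiting the witnessing triple explicitly inside $G^{eq}$.
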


Pillay first realized the pattern and proposed an hierarchy of ampleness, non $1$-basedness ($1$-ampleness) and non $CM$-triviality ($2$-ampleness) 
being the first two items in it (see \cite{PiAmp}). His definition needed some fine ``tuning'' as observed by Evans \cite{EvAmp}. 

\begin{defi}[\cite{EvAmp}]\label{Ample}
Let $T$ be a stable theory and $n\geq1$. Then $T$ is $n$-ample if (after possibly adding some parameters) there are 
$a_0,a_1,\ldots,a_n$ such that:
\begin{enumerate}
\item $a_0$ forks with $a_n$ over $\emptyset$;
\item $a_{i+1}$ does not fork with $a_0,\ldots,a_{i-1}$ over $a_i$, for $1\leq i<n$;
\item $acl^{eq}(a_0)\cap acl^{eq}(a_1)=acl^{eq}(\emptyset)$;
\item $acl^{eq}(a_0,\ldots,a_{i-1},a_i)\cap acl^{eq}(a_0,\ldots,a_{i-1},a_{i+1})=acl^{eq}(a_0,\ldots,a_{i-1})$, for $1\leq i<n$.
\end{enumerate}

\end{defi}

For the precise definitions of the above mentioned model theoretic notions we refer the reader to Section \ref{2}.

In this paper we give a sequence of primitive elements of $\F_{\omega}$ witnessing $n$-ampleness, for any $n<\omega$.
 
The paper is structured as follows. The next section serves as an introduction to the notions (of both model theory and geometric group theory) 
needed in order to place the result into context. The main purpose is to make our exposition as friendly as possible to 
the general reader, we still give references when we feel that this is not possible. 

In the third section we give a sequence of primitive elements and prove that this 
sequence witnesses $n$-ampleness for any $n<\omega$. Actually, our methods provide 
an ``abundance'' of examples to ampleness.

Finally, we add an appendix in which we record some strengthenings and alternative transparent proofs of  
Theorems 2.1, 2.2, and 2.3 from \cite{SelIm}, 
that prove that the ``basic'' imaginaries (see Definition \ref{Imaginaries}) cannot be eliminated. 



\begin{rmk}
The idea for the sequence witnessing ampleness (see Section \ref{3}) came to us after reading \cite{OuldTentAmple}, in which 
the main result is the ampleness of the theory of non-Abelian free groups (or more generally the theory of any (non-cyclic) 
torsion-free hyperbolic group). We first posted our sequence in \cite{SklinosAmple}, where we used results from \cite{OuldTentAmple} 
in order to prove that our sequence satisfies the algebraic criteria (3) and (4) of Definition \ref{Ample}. 

Conceptually the two sequences are much different and the main advantage of our sequence is that it 
reduces the work of satisfying the algebraic conditions to a (well-absorbed) fact about homeomorphisms of surfaces. 
Thus, in this paper we give an alternative proof to the one given in \cite{SklinosAmple} with the hope that it will add to 
the understanding of the first-order theory of non-Abelian free groups.
\end{rmk}

\paragraph{Acknowledgements} We would like to thank Zlil Sela for some helpful discussions and Gilbert Levitt for a useful correspondence. Many thanks to the anonymous referee whose comments improved the exposition of the article.

\section{Preliminaries}\label{2}
In this section we collect some basic definitions and facts about model theory and geometric group theory. To be more precise 
in the first subsection we will define the notion of imaginaries and explain various notions of elimination of imaginaries. 
In the second subsection we explain the notion of forking independence and we connect it with the notion of 
generic types in stable groups. In the next section, we 
specialize these notions to the first order theory of the free group. In the 
fourth subsection we will define amalgamated free products and give some normal form theorems for elements in an amalgamated free product. 
In the last subsection we will define pseudo-Anosov homeomorphisms of surfaces and record some useful facts about them. 

The reader should note that our treatment is by no means complete but we will always provide references for notions and results 
that are not adequately explained.

\subsection{Imaginaries}
We fix a first order structure $\mathcal{M}$ and we are interested in the collection of definable sets in $\mathcal{M}$, i.e. all subsets 
of some cartesian power of $\mathcal{M}$ which are the solution sets of first order formulas (in $\mathcal{M}$). 
In some cases one can easily describe this collection usually thanks to some quantifier elimination result. 
For example, as algebraically closed fields admit (full) quantifier elimination (in the language of rings) 
the class of definable sets coincides with the class of constructible sets, i.e. the class consisting of boolean combinations of Zariski closed sets. 
On the other hand, although free groups admit quantifier elimination
down to boolean combinations of $\forall\exists$ formulas (see \cite{Sel5,Sel5bis}), the ``basic'' definable sets are not so easy to describe. 
 
Suppose $X$ is a definable set in $\mathcal{M}$. One might ask whether there is a canonical way to 
define $X$, i.e. is there a tuple $\bar{b}$ and a formula $\psi(\bar{x},\bar{y})$ such that $\psi(\mathcal{M},\bar{b})=X$ 
but for any other $\bar{b}'\neq \bar{b}$, $\psi(\mathcal{M},\bar{b}')\neq X$? 

To give a positive answer to the above mentioned question one has to move to a ``mild'' expansion of 
$\mathcal{M}$ called $\mathcal{M}^{eq}$. Very briefly $\mathcal{M}^{eq}$ is constructed from $\mathcal{M}$ 
by adding a new sort for each $\emptyset$-definable equivalence relation, $E(\bar{x},\bar{y})$, together 
with a class function $f_E:M^n\rightarrow M_E$, where $M_E$ (the domain of the new sort corresponding to $E$) 
is the set of all $E$-equivalence classes. The elements in these new sorts are called {\em imaginaries}. 
In $\mathcal{M}^{eq}$, it is not hard to see that one can assign to each definable set a canonical parameter in the sense discussed above. 
Indeed, let $X$ be the solution set of the formula $\phi(\bar{x},\bar{b})$ in $\mathcal{M}$ and consider the equivalence relation 
$E(\bar{y}_1,\bar{y}_2):=\forall\bar{x}(\phi(\bar{x},\bar{y}_1)\leftrightarrow \phi(\bar{x},\bar{y}_2))$. Then 
the element $f_E(\bar{b})$ in $\mathcal{M}^{eq}$ serves as the ``canonical parameter'' that when it is plugged in 
in the formula $\psi(\bar{x},z_E):=\exists \bar{y} (\phi(\bar{x},\bar{y})\land f_E(\bar{y})=z_E)$, where $z_E$ denotes a 
variable that takes values in the $E$-sort, defines canonically the set $X$.

An element $a$ of $\mathcal{M}^{eq}$ is {\em algebraic} (respectively {\em definable}) over $A\subseteq \mathcal{M}^{eq}$, 
denoted $a\in acl^{eq}(A)$ (respectively $a\in dcl^{eq}(A)$), 
if there exists a first order formula over $A$ with finitely many solutions 
(respectively exactly one solution) in $\mathcal{M}^{eq}$ containing $a$. 

We say that $\mathcal{M}$ {\em eliminates imaginaries} if there is a saturated elementary extension 
of $\mathcal{M}$ in which all definable sets can be assigned a canonical parameter. Equivalently, 
$\mathcal{M}$ eliminates imaginaries if it has a saturated elementary extension $\mathbb{M}$ 
in which for any element $\mathfrak{e}$ of $\mathbb{M}^{eq}$, 
there is a finite tuple $\bar{b}\in\mathbb{M}$ such that $\mathfrak{e}\in dcl^{eq}(\bar{b})$ and $\bar{b}\in dcl^{eq}(\mathfrak{e})$.

One can alter the above definition to obtain the following weaker notions of elimination of imaginaries.  
We say that $\mathcal{M}$ {\em weakly} (respectively {\em geometrically}) {\em eliminates imaginaries} 
if it has a saturated elementary extension $\mathbb{M}$ 
in which for any element $\mathfrak{e}$ of $\mathbb{M}^{eq}$, 
there is a finite tuple $\bar{b}\in\mathbb{M}$ such that $\mathfrak{e}\in dcl^{eq}(\bar{b})$ (respectively $\mathfrak{e}\in acl^{eq}(\bar{b})$) 
and $\bar{b}\in acl^{eq}(\mathfrak{e})$. 

The interested reader can find more details in \cite[Sections 16.4 \& 16.5]{PoizatModelTheory}.

 


\subsection{Forking independence}
For a quick introduction to forking independence in stable theories we refer the reader to \cite[Section 2]{LouderPerinSklinosTowers} 
or \cite[Section 2.2]{PerinSklinosForking}, more thorough references are \cite{PillayStability} 
and \cite{PoizatModelTheory}. 

A first order theory $T$ is called stable if it supports a notion of independence (between tuples in an ``enough'' saturated model of $T$) 
satisfying certain properties. As a matter of fact, in a stable theory, there is exactly one notion of independence with the 
desired properties, which is called forking independence. 

We work in a ``big'' saturated model, $\mathbb{M}$, of a stable theory $T$ (what model theorists oftenly call the {\em monster model} 
see \cite[p.218]{MarkerModelTheory}). 
Let $\bar{a},\bar{b},\ldots$ denote finite tuples in $\mathbb{M}$ and $A,B,\ldots$ 
small subset of $\mathbb{M}$, i.e. $\abs{A},\abs{B}<\abs{\mathbb{M}}$. We say that $\bar{a}$ {\em forks with} (is {\em not independent from}) $B$ over $A$ if there 
is $\phi(\bar{x},\bar{b})\in tp(\bar{a} / B)$ and an indiscernible sequence $(\bar{b}_i)_{i<\omega}$ over $A$ with $tp(\bar{b} / A)=tp(\bar{b}_i / A)$, 
such that $\{\phi(\bar{x},\bar{b}_i) \ | \ i<\omega\}$ is inconsistent. 

Moreover a sequence of tuples $\bar{a}_1,\ldots, \bar{a}_k$ is called an {\em independent set (over A)} if $\bar{a}_i$ does not 
fork with $\bar{a}_1,\ldots,\bar{a}_{i-1}$ over $A$, for all $i\leq k$.

A group, $G$, is called stable if it is definable in some model of a stable theory $T$. An important aspect of stable groups is the 
existence of generic types. A type $tp(a/A)$ with $a\in G$ is called {\em generic} if 
whenever $g\in G$ and $a$ does not fork with $g$ over $A$, then $g\cdot a$ does not fork with $g$ over $A$. 
Moreover, an element $a\in G$ is called {\em generic} (over $A$) if $tp(a / A)$ is generic. 

The development of 
stable group theory in full generality is mostly due to B. Poizat and an elegant reference for a more thorough reading 
as well as for motivating the above definitions is \cite{PoizatStableGroups}.


\subsection{The free group}
We now specialize all the above model theoretic notions 
to the first order theory of the free group (considered in the natural language for groups, i.e. $(\cdot,^{-1},1)$).

We start by defining some ``basic'' families of imaginaries. 

\begin{defi}\label{Imaginaries}
Let $\F$ be a non abelian free group. The following equivalence relations in $\F$ are called basic.
\begin{itemize}
 \item[(i)] $E_1(a,b)$ if and only if there is $g\in \F$ such that $a^g=b$. (conjugation)
 \item[$(ii)_m$] $E_{2_m}((a_1,b_1),(a_2,b_2))$ if and only if either $b_1=b_2=1$ or $b_1\neq 1$ and $C_{\F}(b_1)=C_{\F}(b_2)=\langle b \rangle$ and
$a_1^{-1}a_2\in\langle b^m \rangle$. ($m$-left-coset)
 \item[$(iii)_m$] $E_{3_m}((a_1,b_1),(a_2,b_2))$ if and only if either $b_1=b_2=1$ or $b_1\neq 1$ and $C_{\F}(b_1)=C_{\F}(b_2)=\langle b \rangle$ and
$a_1a_2^{-1}\in\langle b^m \rangle$. ($m$-right-coset)
 \item[$(iv)_{m,n}$] $E_{4_{m,n}}((a_1,b_1,c_1),(a_2,b_2,c_2))$ if and only if either 
$a_1=a_2=1$ or $c_1=c_2=1$ or $a_1,c_1\neq 1$ and $C_{\F}(a_1)=C_{\F}(a_2)=\langle a \rangle$ and $C_{\F}(c_1)=C_{\F}(c_2)=\langle c \rangle$
  and there is $\gamma\in \langle a^m \rangle$ and $\epsilon\in \langle c^n \rangle$ such that $\gamma b_1 \epsilon=b_2$. ($m,n$-double-coset)
\end{itemize}

\end{defi}

It is almost immediate that $m$-left cosets eliminate $m$-right cosets (and vice versa), 
so from now on we are economic and forget about the $m$-right-cosets.

Sela proved the following theorem concerning imaginaries in non abelian free groups (see \cite[Theorem 4.4]{SelIm}).

\begin{thm}\label{Elim}
Let $\F$ be a non abelian free group. Let $E(\bar{x},\bar{y})$ be a definable equivalence relation in $\F$, with $\abs{\bar{x}}=m$.
Then there exist $k,l<\omega$ and a definable relation:
$$R_E \subseteq \F^m \times \F^k \times S_1(\F) \times \ldots \times S_l(\F)$$
such that:
\begin{itemize}
 \item[(i)] each $S_i(\F)$ is one of the basic sorts;
 \item[(ii)] for each $\bar{a}\in \F^m$ , $\abs{R_E(\bar{a},\bar{z})}$ is uniformly bounded (i.e. the bound does not depend on $\bar{a}$);
 \item[(iii)] $\forall\bar{z}(R_E(\bar{a},\bar{z})\leftrightarrow R_E(\bar{b},\bar{z}))$ if and only if $E(\bar{a},\bar{b})$.
\end{itemize}
\end{thm}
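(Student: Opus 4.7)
The plan is to describe each $E$-equivalence class $[\bar{a}]_E$ geometrically, as a finite union of orbits of a modular group of automorphisms acting on a Diophantine set, and then to identify the invariants of these orbits with tuples in the basic sorts of Definition \ref{Imaginaries}. The guiding intuition is that the only directions in which a witness $\bar{b}$ with $E(\bar{a},\bar{b})$ can be deformed are conjugations and Dehn twists, whose visible residues modulo $E$ are precisely cyclic-centralizer cosets.

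First, I would apply the quantifier elimination down to Boolean combinations of $\forall\exists$-formulas to present $[\bar{a}]_E$ as a finite union of pieces of the form $\pi(V_i(\bar{a}) \setminus W_i(\bar{a}))$, with $V_i, W_i$ Diophantine in $\bar{a}$. Over each $V_i(\bar{a})$ one considers the canonical JSJ decomposition of the associated limit group; this decomposition comes equipped with a modular group generated by Dehn twists along edges and by inner automorphisms. The orbit of a distinguished witness under this modular group sweeps out a generic subset of the piece, and its residual invariants land naturally in the basic sorts: a Dehn twist by an edge group $\langle c \rangle$ conjugates certain syllables by powers of $c$ and so contributes a coset in sort $(ii)_m$ or a double coset in sort $(iv)_{m,n}$, while inner automorphisms contribute conjugacy-class invariants of sort (i). Bundling these finitely many cosets per piece into the tuple $\bar{z}$ and declaring $R_E(\bar{a},\bar{z})$ to hold when $\bar{z}$ encodes some piece of $[\bar{a}]_E$ makes condition (iii) automatic by construction.

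The main obstacle is establishing condition (ii), the uniform finite bound on $\abs{R_E(\bar{a},\bar{z})}$ that is independent of $\bar{a}$. For a single $\bar{a}$ such a bound follows from the finiteness of the JSJ decomposition of the associated limit group; what is needed is uniformity in the parameter, which requires a compactness argument over the Makanin--Razborov diagram of the formula defining $E$. Roughly, if no such bound existed one would extract a sequence $(\bar{a}_n)$ with unboundedly many pieces and, by an ultralimit, produce a degenerate action on an $\R$-tree contradicting the bounded complexity of the decomposition type attached to the formula. This uniformity step is the genuinely difficult part of the argument; the rest is, in principle, a bookkeeping translation of the geometric data produced by the Sela programme into the language of basic imaginaries.
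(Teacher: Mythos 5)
This theorem is not proved in the paper you are reading; it is stated verbatim as a citation of Sela's Theorem~4.4 from \cite{SelIm}, and the author explicitly defers to that reference. So there is no in-paper proof to compare against, and the right question is whether your sketch faithfully reconstructs the shape of Sela's argument.

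At a high level it does. Sela's strategy is indeed to reduce, via quantifier elimination down to Boolean combinations of $\forall\exists$-formulas, to Diophantine pieces, attach to each piece a limit group with its canonical JSJ decomposition, and observe that the modular group (Dehn twists along abelian edge groups and inner automorphisms) is responsible for the nontrivial identifications $E$ can make. The residues of Dehn twists naturally live in the coset and double-coset sorts $(ii)_m$, $(iv)_{m,n}$, and inner automorphisms produce the conjugation sort $(i)$, exactly as you say. Condition (iii) is then a matter of wiring up $R_E$ correctly, and condition (ii) is the real content.

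Where your sketch is too optimistic is precisely the step you flag as ``the genuinely difficult part.'' Sela does not establish the uniform bound by a single compactness/ultralimit argument over a Makanin--Razborov diagram; he has to build an entire parallel machine of \emph{Diophantine envelopes} and \emph{duo-limit groups} (occupying the bulk of \cite{SelIm}) whose finiteness properties deliver the bound uniformly in $\bar a$. The naive ultralimit you describe would only show that the number of pieces of the JSJ cannot blow up, but what must be controlled is the number of \emph{basic-imaginary residues} of those pieces, and these can a priori multiply along a graded resolution as the parameter varies. Likewise, the final ``bookkeeping translation'' conceals a nontrivial fact: that after twisting out the modular group, what remains in each orbit is describable by only \emph{finitely many} centralizer cosets drawn from the fixed list of basic sorts, with no residual imaginaries of a new type. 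That is the punchline of Sela's theorem, not a formality. So as a conceptual outline your proposal is sound and matches the known route; as a proof it leaves the hardest two steps as black boxes that you would need \cite{SelIm} to fill.
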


If we denote by $\F^{we}:=(\F,S_1(\F),\{S_{2_m}(\F)\}_{m<\omega},\{S_{4_{m,n}}(\F)\}_{m,n<\omega})$, then the above theorem together with 
\cite[Proposition 4.5]{SelIm} implies:

\begin{thm}\label{WeakElim}
Let $\F$ be a non abelian free group. Then $\F^{we}$ weakly eliminates imaginaries.
\end{thm}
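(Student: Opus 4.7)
The plan is to derive Theorem \ref{WeakElim} directly from Theorem \ref{Elim}, exploiting the standard paradigm that a uniformly bounded ``coding'' of each imaginary by a finite set of real tuples yields weak elimination, because any finite set of real tuples is itself weakly eliminable simply by enumeration.

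First I would unwind the problem in the monster model. Let $\mathfrak{e}\in\mathbb{M}^{eq}$ and write $\mathfrak{e}=[\bar{a}]_E$ for some $\emptyset$-definable equivalence relation $E$ on $m$-tuples (parameters can be absorbed into the language in the usual way). Apply Theorem \ref{Elim} to $E$ to produce the definable relation
$$R_E\subseteq \F^m\times \F^k\times S_1(\F)\times\cdots\times S_l(\F),$$
and set
$$F_{\mathfrak{e}}\;:=\;\{\bar{z}\,:\,R_E(\bar{a},\bar{z})\}.$$
Condition (ii) guarantees that $F_{\mathfrak{e}}$ is finite (uniformly in $\bar{a}$), while condition (iii) says both that $F_{\mathfrak{e}}$ depends only on $[\bar{a}]_E$ and that distinct $E$-classes produce distinct $R_E$-sections. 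Consequently $F_{\mathfrak{e}}$, regarded as a finite imaginary, is interdefinable with $\mathfrak{e}$: we have $F_{\mathfrak{e}}\in dcl^{eq}(\mathfrak{e})$ and $\mathfrak{e}\in dcl^{eq}(F_{\mathfrak{e}})$.

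Second, I would pass from the finite set $F_{\mathfrak{e}}$ to an honest tuple in the basic sorts. Fix any enumeration $(\bar{z}_1,\ldots,\bar{z}_N)$ of $F_{\mathfrak{e}}$ and let $\bar{b}$ be the concatenation; since each $\bar{z}_i$ lies in $\F^k\times S_1(\F)\times\cdots\times S_l(\F)$, the tuple $\bar{b}$ is a real tuple of $\F^{we}$. There are at most $N!$ such enumerations, all conjugate over $F_{\mathfrak{e}}$ (hence over $\mathfrak{e}$), so $\bar{b}\in acl^{eq}(\mathfrak{e})$. Conversely $F_{\mathfrak{e}}$ is recovered as the set of entries of $\bar{b}$, giving $\mathfrak{e}\in dcl^{eq}(F_{\mathfrak{e}})\subseteq dcl^{eq}(\bar{b})$. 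This is exactly the statement of weak elimination of imaginaries.

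The nontrivial content sits entirely in Theorem \ref{Elim}, so there is no real obstacle here; the rest is a formal manoeuvre. The one place that deserves care, and which is presumably the role of \cite[Proposition 4.5]{SelIm}, is to check that the finite-set-to-tuple step really does land inside $\F^{we}$ (in other words that the coordinates of $\bar{b}$ can be taken in the declared basic sorts $S_1,\{S_{2_m}\},\{S_{4_{m,n}}\}$, together with $\F$ itself) and that the reduction from arbitrary definable $E$ to the $\emptyset$-definable case of Theorem \ref{Elim} is harmless. Once these verifications are in place the argument closes immediately, with no further use of free-group geometry.
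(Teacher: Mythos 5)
Your overall strategy is the right one, and it is essentially the route the paper gestures at: Theorem~\ref{Elim} codes each $E$-class by a uniformly bounded finite set of tuples in the basic sorts, and a finite set is weakly eliminable by enumeration. The second half of your argument (from the finite set $F_{\mathfrak{e}}$ to the concatenated tuple $\bar{b}$) is fine and standard.

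The gap is in the first half, specifically in the claim that $F_{\mathfrak{e}}\in dcl^{eq}(\mathfrak{e})$. For that you would need the assignment $[\bar{a}]_E\mapsto R_E(\bar{a},\cdot)$ to be invariant under $\Aut(\mathbb{M}/\mathfrak{e})$, which is guaranteed if $R_E$ can be chosen $\emptyset$-definable whenever $E$ is. But Theorem~\ref{Elim} only asserts that $R_E$ is \emph{definable}, possibly over extra parameters $C$, and the Remark immediately following Theorem~\ref{WeakElim} in the paper explicitly shows this cannot be repaired: there are $\emptyset$-definable $E$ (e.g.\ ones with finitely many classes, each $\emptyset$-definable by \cite[Theorem 3.1]{PerinSklinosForking}) for which no $\emptyset$-definable $R_E$ can satisfy conditions (i)--(iii). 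So, without further input, you only get that $F_{\mathfrak{e}}\subseteq dcl^{eq}(\mathfrak{e},C)$ and $\mathfrak{e}\in dcl^{eq}(F_{\mathfrak{e}},C)$, i.e.\ weak elimination \emph{over} $C$, and an automorphism fixing $\mathfrak{e}$ but moving $C$ may move $F_{\mathfrak{e}}$ to the section of a different witness $\sigma(R_E)$, so it is not clear the orbit of your $\bar{b}$ under $\Aut(\mathbb{M}/\mathfrak{e})$ is finite. This is exactly what \cite[Proposition 4.5]{SelIm} is cited to take care of; the paper's proof is really ``Theorem~\ref{Elim} + Proposition~4.5,'' not Theorem~\ref{Elim} alone.

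You do flag that Proposition~4.5 plays \emph{some} role, but you misidentify it. Your first worry (that $\bar{b}$ lands in $\F^{we}$) is immediate from condition~(i) of Theorem~\ref{Elim}, which already puts the coordinates in $\F$ and the basic sorts. Your second worry (a ``reduction from definable $E$ to $\emptyset$-definable $E$'') has the direction reversed and is not the obstruction: the input $E$ may as well be $\emptyset$-definable; it is the \emph{output} $R_E$ whose parameter dependence is the problem. To close the argument you need to either invoke Proposition~4.5 for the precise uniformity it provides, or give a separate argument that the finitely many sections $\sigma(R_E)(\sigma(\bar{a}),\cdot)$ over all $\sigma\in\Aut(\mathbb{M}/\mathfrak{e})$ still form a finite set.
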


\begin{rmk}
One has to be careful with stating a $\emptyset$-definable (i.e. definable by a first-order formula without any parameters) 
version of Theorem \ref{Elim}. Actually it is easy to find a 
counterexample if one replaces definable by $\emptyset$-definable everywhere in the above theorem: 
Let $E$ be a $\emptyset$-definable equivalence relation with finitely many classes, then by \cite[Theorem 3.1]{PerinSklinosForking} 
each class is $\emptyset$-definable, but then the above relation can only assign to each class the single tuple consisting of trivial (imaginary) elements, i.e. $[(1,1,1)]_{E_{4_{m,n}}}$ or $[(1,1)]_{E_{2_{n}}}$ or $[1]_{E_1}$ or a trivial real element, a contradiction since we want the relation to distinguish between classes. 
\end{rmk}

Not long after the positive solution to Tarski's question (see \cite{Sel6,KharlampovichMyasnikov}), 
that is whether non abelian free groups share the same common theory, Sela proved 
the following astonishing result \cite{SelaStability}.

\begin{thm}
The first order theory of non abelian free groups, $T_{fg}$, is stable.
\end{thm}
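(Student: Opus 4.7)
The plan is to prove stability by showing that no formula has the order property: there is no $\varphi(\bar x, \bar y)$ and sequences $(\bar a_i)_{i<\omega}$, $(\bar b_i)_{i<\omega}$ in $\F$ with $\F \models \varphi(\bar a_i, \bar b_j) \iff i \leq j$. Equivalently, by a standard counting-of-types argument, every complete type over a set $A$ is definable. Since a formula-by-formula analysis of arbitrary first-order formulas in $\F$ is hopeless directly, the first step is to reduce to a manageable class of formulas via a quantifier-elimination result. Namely, I would appeal to Sela's quantifier elimination down to Boolean combinations of $\forall\exists$-formulas (from \cite{Sel5,Sel5bis}) so that it suffices to check the absence of the order property for this restricted class.

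Next, I would translate the model-theoretic problem into geometric group theory. A positive (existential) formula $\exists \bar z \, \Sigma(\bar x, \bar y, \bar z) = 1$ defines the image in $\F^{|\bar x|+|\bar y|}$ of the variety of solutions of a system of equations, and a universal formula is its complement. To control $\forall \exists$ formulas I would invoke the machinery of Makanin--Razborov diagrams and JSJ decompositions: the set of homomorphisms from a finitely generated group into $\F$ is encoded by a finite resolution whose strata correspond to quotients with increasingly rigid JSJ structure, and the definable sets cut out by $\forall\exists$-formulas can be described as finite unions of sets built from projections of such strata together with equational conditions on parameters.

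The heart of the proof is then to show that, given such a structural description, no uniformly chosen family of instances $\varphi(\bar x, \bar b_i)$ can realize the order relation. Concretely, I would argue that if the order property held, then by compactness one obtains an indiscernible sequence $(\bar b_i)$ over which the defining data (the JSJ decompositions, the Makanin--Razborov resolutions, the modular groups controlling the homomorphisms) stabilize; combined with the descent in complexity guaranteed by the Makanin--Razborov process, this uniformity must collapse the order into an equivalence relation, contradicting the assumed strict ordering. This is essentially the role played by Sela's ``test sequences'' and the shortening argument: they force eventual coincidence of types along indiscernible sequences.

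The main obstacle, and the reason this theorem is a genuine tour-de-force rather than a short argument, is the geometric classification of definable sets: both establishing the $\forall\exists$-quantifier elimination and controlling the solution sets of $\forall\exists$-formulas require the full strength of the Makanin--Razborov analysis, the theory of limit groups, JSJ decompositions over $\Z$, and Sela's shortening argument. Each of these ingredients is highly nontrivial, and packaging them into a proof that forking is well-behaved (equivalently, that no formula has the order property) is the real content of \cite{SelaStability}. A truly self-contained proof within this paper is out of reach; the most one can do in a sketch is indicate the reduction to $\forall\exists$-definable sets and point to the geometric mechanism that forbids long alternations.
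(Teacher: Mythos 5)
The paper does not prove this theorem: it is cited verbatim as Sela's result from \cite{SelaStability}, with no proof or proof sketch offered. Your write-up is therefore not being compared against a proof in the paper but against a citation, and your own conclusion --- that a self-contained proof is out of reach and one must defer to Sela --- is exactly the stance the paper takes.

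As a roadmap of Sela's actual argument, what you say is broadly in the right direction (reduction via quantifier elimination to Boolean combinations of $\forall\exists$-formulas, then control of those sets via Makanin--Razborov resolutions, limit groups, JSJ decompositions, test sequences, and the shortening argument), but two points deserve a caveat. First, the sentence ``Equivalently, by a standard counting-of-types argument, every complete type over a set $A$ is definable'' conflates distinct characterizations of stability; no order property, definability of types, and type-counting bounds are each equivalent to stability, but the equivalence between the first two is not ``by a counting argument,'' and phrasing it this way obscures what actually has to be shown. Second, the mechanism you describe for killing the order property --- extract an indiscernible sequence, observe that the resolutions ``stabilize,'' conclude the order collapses to an equivalence relation --- is much vaguer than what Sela does. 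His proof establishes that the relevant $\forall\exists$-definable sets lie in the Boolean algebra generated by Diophantine (equational) sets equipped with Diophantine envelopes, and then proves stability of these formulas by an induction on a complexity measure coming from the Makanin--Razborov analysis, with the minimal-rank case handled separately; the shortening argument enters to drive the induction, not to ``collapse an order.'' So your sketch is a fair impressionistic summary, but it should not be mistaken for an accurate outline of the proof structure. Since the paper itself merely cites the theorem, none of this constitutes a discrepancy with the paper.
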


We note that, by work of Poizat \cite{PoizatGenericAndRegular}, $T_{fg}$ is connected, i.e. there is no definable proper subgroup 
of finite index (in any model of $T_{fg}$). In stable theories this is equivalent to saying that there is a unique 
generic type over any set of parameters.

We now recall some results about forking independence in the theory of the free group. 
For the purpose of this paper the following theorems of Pillay concerning forking independence 
and generic elements are enough. We denote by $\F_n$ the free group of rank $n$.
 
\begin{thm}[Corollary 2.7(ii)\cite{PillayForking}]\label{ForkPil}
Let $n>1$. For any basis, $a_1,\ldots,a_n$, of $\F_n$ we have that $a_1,\ldots, a_n$ is an independent set of realizations 
of the (unique) generic type.
\end{thm}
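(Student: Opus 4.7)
The plan is to verify, in turn, that (a) every $a_i$ realizes the unique generic type $p_0 \in S(\emptyset)$, and (b) the sequence $(a_1,\ldots,a_n)$ is forking-independent over $\emptyset$. For (a), I would combine connectedness of $T_{fg}$ (Poizat), which furnishes a unique generic $p_0$, with the fact that $\Aut(\F_n)$ acts transitively on primitive elements via Nielsen--Whitehead transformations; since automorphisms of a model are elementary they preserve $\emptyset$-types, so all primitives share a common $\emptyset$-type. Identifying this common type with $p_0$ would then follow from the characterization of generic types in connected stable groups, using that a primitive element cannot lie in any proper definable coset coming from $\emptyset$-definable data.

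For (b), I would proceed by induction on $i$, establishing the stronger statement $a_i \mathop{\downarrow}_\emptyset H_{i-1}$, where $H_{i-1} := \langle a_1,\ldots,a_{i-1}\rangle$. The key structural input is Perin's theorem that free factors are elementary subgroups, giving $H_{i-1} \preceq \F_n$; consequently $H_{i-1}$ is itself a model of $T_{fg}$, so $\tp(a_i / H_{i-1})$ is stationary. Because $\tp(a_i / \emptyset) = p_0$ by step (a), showing that $\tp(a_i / H_{i-1})$ is the unique non-forking extension of $p_0$ reduces to verifying invariance of this type under $\Aut(H_{i-1})$. The invariance is transparent from the free splitting $\F_n = H_{i-1} * \langle a_i \rangle * \langle a_{i+1},\ldots,a_n\rangle$: any $\sigma \in \Aut(H_{i-1})$ extends to $\hat{\sigma} \in \Aut(\F_n)$ by fixing the complementary free factor pointwise, hence in particular fixing $a_i$; lifting $\hat{\sigma}$ to an automorphism of the monster model (by homogeneity over $\F_n$) shows $\tp(a_i / H_{i-1})$ is fixed by the action of $\Aut(H_{i-1})$ on parameters.

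The main obstacle is the passage from $\Aut(H_{i-1})$-invariance to genuine non-forking: since $H_{i-1} \cong \F_{i-1}$ is countable and not saturated, automorphism-invariance over a non-saturated model does not on its own deliver $\emptyset$-definability of the type in the usual sense. A clean workaround I would adopt is to bypass definability and invoke the combinatorial description of forking in $T_{fg}$ (Perin--Sklinos), under which non-forking of tuples over $\emptyset$ reduces to disjointness of their minimal enveloping free factors in appropriate JSJ-type decompositions; in our situation the decomposition $\F_n = \langle a_1\rangle * \cdots * \langle a_n\rangle$ makes this disjointness entirely transparent, closing the induction.
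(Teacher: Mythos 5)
The paper itself does not prove this statement: it is quoted verbatim as Corollary~2.7(ii) of Pillay's \emph{Forking in the free group} \cite{PillayForking}, so there is no ``paper's own proof'' beyond the citation. What Pillay actually does is use the elementary chain $\F_2\prec\F_3\prec\cdots$ together with the fact that $\F_\omega$ is atomic over $\F_n$ to show that $\tp(e_{n+1}/\F_n)$ is the generic type; independence and genericity of a full basis then follow by transitivity, stationarity, and the transitive action of $\Aut$ on bases. Your proposal goes a genuinely different route, but it has gaps.

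In step (a) the crucial passage is unjustified. You correctly observe that transitivity of $\Aut(\F_n)$ on primitives gives all basis elements the same $\emptyset$-type, but then assert that this common type ``is'' the generic $p_0$ by appealing to ``the characterization of generic types in connected stable groups'' and the remark that a primitive ``cannot lie in any proper definable coset coming from $\emptyset$-definable data.'' There is no such off-the-shelf characterization that would close this step: in a connected stable group, genericity of a $1$-type over $\emptyset$ is detected by invariance of the type under translation (equivalently, $\Stab(p)=G$), not by mere avoidance of proper cosets. Even if one knew that $T_{fg}$ has no proper $\emptyset$-definable subgroups, that alone does not show $\tp(e_1/\emptyset)$ is the translation-invariant type. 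This is precisely the content that Pillay has to establish (via atomicity of $\F_\omega$ over $\F_n$, leading to $e_{n+1}$ being generic over $\F_n$, and then restricting to $\emptyset$), and it is the nontrivial heart of the corollary.

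Step (b) is closer to being workable but leans on machinery that postdates and is heavier than Pillay's: Perin's elementarity of nonabelian free factors and the Perin--Sklinos JSJ characterization of forking. Two cautionary points. First, your induction's base cases fail: $H_0=1$ and $H_1=\langle a_1\rangle\cong\Z$ are not nonabelian free factors, so they are not elementary submodels and the ``stationarity over a model'' argument does not apply at $i=1,2$; these need to be handled separately. Second, the Perin--Sklinos characterization of forking is stated over parameter sets that are free factors (and involves cyclic JSJ decompositions, not merely Grushko decompositions), so some care is required to see that it applies over $\emptyset$ and that the Grushko decomposition $\langle a_1\rangle*\cdots*\langle a_n\rangle$ really does witness the independence as claimed; here it does, because the cyclic factors admit no further cyclic splittings, but the argument should say so explicitly. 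By contrast, Pillay's original proof works entirely inside the elementary chain $\F_2\prec\F_3\prec\cdots$ plus soft stability theory, which is both logically earlier and conceptually lighter. If you want a proof via the geometric characterization of forking, your outline for (b) can be salvaged with the caveats above, but (a) needs an actual argument rather than an appeal to a nonexistent characterization.
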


Recall that an element of a free group $\F$ is called {\em primitive} if it is part of some basis of $\F$. 

\begin{thm}[Theorem 2.1(i)\cite{PillayGenericity}]\label{GenPil}
Let $n>1$. Suppose $a$ is a generic element in $\F_n$. Then $a$ is primitive.
\end{thm}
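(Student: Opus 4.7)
The plan is to combine the uniqueness of the generic type in $T_{fg}$ with the homogeneity of $\F_n$, in order to transfer primitivity from a chosen basis element to $a$. First, I would fix a basis $b_1,\ldots,b_n$ of $\F_n$. By Theorem~\ref{ForkPil}, the tuple $(b_1,\ldots,b_n)$ is an independent sequence of realizations of the (unique) generic type over $\emptyset$; in particular $b_1$ is itself generic. Since $T_{fg}$ is connected (by the Poizat result quoted just above the theorem), the generic type over $\emptyset$ is unique, so $\tp(a/\emptyset) = \tp(b_1/\emptyset)$.

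To conclude, I would invoke the homogeneity of the free group, due to Perin and Sklinos: any two tuples in $\F_n$ sharing the same complete first-order type over $\emptyset$ lie in a single $\Aut(\F_n)$-orbit. Applied to the single elements $a$ and $b_1$, this yields some $\phi \in \Aut(\F_n)$ with $\phi(b_1) = a$. Then $\phi(b_1),\phi(b_2),\ldots,\phi(b_n)$ is a basis of $\F_n$ whose first entry is $a$, so $a$ is primitive.

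The main obstacle is precisely the homogeneity step. Primitivity in $\F_n$ is not, on its face, a first-order property, so sharing a type with a known primitive element $b_1$ does not by itself guarantee primitivity; one must import the external fact that in $\F_n$ type-equivalent elements are $\Aut(\F_n)$-equivalent. This is the nontrivial content of the Perin--Sklinos homogeneity theorem, which itself rests on Sela's JSJ decompositions and the shortening argument. An alternative route would be to argue directly that any definable subset of $\F_n$ avoided by $b_1$ must also be avoided by $a$ (by type equality), combined with a proof that the complement of the set of primitive elements is contained in a definable set missed by $b_1$; but making the latter precise seems at least as difficult as homogeneity.
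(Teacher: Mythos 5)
The theorem as it appears in the paper is not proved there: it is an attribution to Pillay's paper \cite{PillayGenericity}, so there is no internal argument to compare against. The relevant comparison is therefore with Pillay's original proof.

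Your argument is mathematically correct, but it takes a genuinely different (and anachronistic) route from Pillay's. Pillay's paper is from 2009, whereas the homogeneity theorem of Perin--Sklinos and, independently, Ould Houcine dates from roughly 2010--2012; Pillay could not have invoked it, and indeed it does not appear anywhere in the present paper's bibliography. Pillay's original argument works directly with forking, weight, and the structure of elementary free subgroups (building on his 2008 paper \cite{PillayForking} and on Perin's characterization of elementary subgroups as free factors), rather than importing homogeneity as a black box. What your approach buys is brevity and conceptual transparency: once one grants homogeneity, the argument is three lines -- $b_1$ is generic by Theorem~\ref{ForkPil}, uniqueness of the generic type gives $\tp(a)=\tp(b_1)$, and the $\Aut(\F_n)$-orbit equivalence does the rest. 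What it costs is that homogeneity is at least as deep as the statement being proved: its proof uses Sela's JSJ/shortening machinery, so nothing is saved in terms of the ultimate technology invoked, and the proof is no longer self-contained within what Pillay had available. Your closing remark about the alternative definability route is well taken but should be sharpened: for $n\geq 3$ the set of primitive elements of $\F_n$ is in fact known \emph{not} to be definable, so that route cannot work as stated; it is not merely ``at least as difficult.''
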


\subsection{Amalgamated Free Products}
In this subsection we recall some well known facts about amalgamated free products, 
we refer the reader to \cite[Chapter IV]{LyndonSchupp} or to \cite[Section 4.4]{MagnusKarrassSolitar} for more details and 
motivation. We fix two groups $A,B$ a subgroup $C$ of $A$ and an embedding $f:C\to B$. Then the {\em amalgamated free product} 
$G:=A*_CB$ is the group $\langle A,B | c=f(c), \  c\in C\rangle$. Note that $G$ can be viewed as the free product 
$A*B$ quotiened by the normal subgroup generated by $\{cf(c)^{-1} \ | \ c\in C\}$. This construction 
naturally arises in the context of algebraic topology for example in the Seifert - van Kampen theorem 
(see \cite[Section 1.2]{HatcherAlgTop}).

\begin{defi}[Reduced forms]
A product of elements $g_1\cdot\ldots\cdot g_n$ from $A*B$ for $n\geq 0$ 
is in reduced form if the following conditions hold:
\begin{itemize}
 \item for each $i\leq n$, $g_i\in A\cup B$ and $g_i,g_{i+1}$ belong to different factors;
 \item if $n>1$, then no $g_i$ belongs to $C$ or $f(C)$;
 \item if $n=1$, then $g_1\neq 1$.
\end{itemize}
\end{defi}

Clearly, any element $g\in G$ can be written as a product of elements in reduced form, but this form is not unique. 

We can obtain uniqueness once we fix systems of representatives for the right cosets of $C$ in $A$ and 
for the right cosets of $f(C)$ in $B$. 

\begin{defi}[Normal forms]
Let $S$ (respectively $T$) be a system of right coset representatives for $C$ in $A$ 
(respectively a system of right coset representatives for $f(C)$ in $B$). Then a product 
of elements $c\cdot g_1\cdot\ldots\cdot g_n$ from $A*B$ is in normal form if $c\in C$ and 
$g_1\cdot\ldots\cdot g_n$ is in reduced form with each $g_i$ belonging to $S\cup T$.
\end{defi}

Then, we have:

\begin{thm}[Normal Form Theorem]\label{NormForm}
Let $S$ (respectively $T$) be a system of right coset representatives for $C$ in $A$ 
(respectively a system of right coset representatives for $f(C)$ in $B$). Let $g\in G$. 
Then $g$ can be uniquely represented as a product of elements in normal form. 
\end{thm}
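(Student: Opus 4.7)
The plan is to handle existence and uniqueness separately, following the classical approach of van der Waerden.

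For existence, start with any expression of $g$ as a word in elements of $A \cup B$. First apply a reduction procedure: collapse adjacent letters that lie in the same factor, and whenever a letter belongs to $C$ (or $f(C)$) absorb it into a neighbouring letter using the relation $c = f(c)$. This produces a reduced expression $g_1 \cdots g_n$. Then normalise from right to left: write $g_n$ uniquely as $c_n h_n$ with $c_n \in C$ and $h_n \in S \cup T$, and replace $g_{n-1}$ by $g_{n-1} c_n$, which still lies in the same factor; iterate. The resulting product lies in normal form, with the accumulated scalar from $C$ sitting in the leftmost position.

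For uniqueness, I would construct an action of $G$ on the set $W$ of all normal forms. For each $a \in A$, define a permutation $\lambda_a$ of $W$ intended to send $(c; h_1, \ldots, h_n)$ to the normal form of $a c h_1 \cdots h_n$. Concretely: if $n = 0$ or $h_1 \in T$, compute the unique decomposition $ac = c' s$ with $c' \in C$, $s \in S$, and return $(c'; s, h_1, \ldots, h_n)$, omitting $s$ when $s = 1$; if $h_1 \in S$, compute $a c h_1 = c'' s'$ in the same way and return $(c''; s', h_2, \ldots, h_n)$, omitting $s'$ when trivial. Define $\mu_b$ on $W$ symmetrically using $T$ and $f(C)$. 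Then verify that $a \mapsto \lambda_a$ is a homomorphism $A \to \mathrm{Sym}(W)$, that $b \mapsto \mu_b$ is a homomorphism $B \to \mathrm{Sym}(W)$, and that $\lambda_c = \mu_{f(c)}$ for every $c \in C$. By the universal property of $A *_C B$, these combine into a well-defined action $\rho : G \to \mathrm{Sym}(W)$.

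To conclude, observe that for any normal form $w = (c; h_1, \ldots, h_n)$ representing $g = c h_1 \cdots h_n$, the permutation $\rho(g)$ applied to the empty normal form $(1; \, )$ returns exactly $w$. Since $\rho(g)$ depends only on $g$, any two normal forms representing the same element of $G$ must coincide. The main obstacle is the case analysis needed to verify that $a \mapsto \lambda_a$ is a homomorphism: one must check that $\lambda_{a_1 a_2}$ and $\lambda_{a_1} \circ \lambda_{a_2}$ agree on each input, and the delicate situation is when the incoming $A$-multiplication interacts with a leading letter in $S$ and the intermediate coset representative happens to become trivial, so that the alternation between $S$ and $T$ in the output is re-established in a non-obvious way. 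Once these routine but careful identities are checked, the theorem is immediate.
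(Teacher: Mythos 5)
The paper does not supply a proof of this theorem; it is stated as a classical background fact with pointers to Lyndon--Schupp (Chapter IV) and Magnus--Karrass--Solitar (Section 4.4). Your proposal is the standard van der Waerden argument for amalgamated free products --- existence by right-to-left normalization of a reduced word, uniqueness via an action of $G$ on the set of normal forms obtained through the universal property of $A *_C B$, evaluated at the empty normal form --- and it is correct; this is essentially the treatment found in the paper's own cited references, with the routine-but-delicate verification that $a \mapsto \lambda_a$, $b \mapsto \mu_b$ are homomorphisms agreeing on $C$ being the only part you leave to case analysis, as is customary.
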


Since we will do many calculations with normal forms, we give more details for a situation that will often occur. Fix $S$ (respectively $T$)  
a system of right coset representatives for $C$ in $A$ (respectively $f(C)$ in $B$). If $g\in A$ (respectively $g\in B$), 
denote by $\hat{g}$ (respectively $\tilde{g}$) the element 
in $S$ (respectively in $T$) such that $Cg=C\hat{g}$ (respectively $f(C)g=f(C)\tilde{g}$). Let 
$\gamma=cg_1g_2\ldots g_n$ be an element in normal form and let $a$ be an element in $A$. We would like to calculate 
the normal form of $\gamma\cdot a$. We take cases with respect to whether $g_n$ is in $A$ or $B$:

\begin{itemize}
 \item suppose that $g_n$ is in $A$. Then the normal form of $\gamma\cdot a$ is 
 $c\cdot c_1 \widehat{g_1c_2}\widetilde{g_2c_3}\ldots\widetilde{g_{n-1}c_n}$ $\widehat{g_na}$, where the $c_i$'s belong to $C$ 
 and $g_na=c_n\widehat{g_na}$, $g_{n-1}c_n=c_{n-1}\widetilde{g_{n-1}c_n}$, $\ldots$, $g_1c_2=c_1\widehat{g_1c_2}$.
 \item suppose that $g_n$ is in $B$. In this case, if $a$ is in $C$, then $\gamma\cdot a$ is 
 $c\cdot c_1 \widehat{g_1c_2}\widetilde{g_2c_3}\ldots\widehat{g_{n-1}c_n}$ $\widetilde{g_na}$, where the $c_i$'s belong to $C$ 
 and $g_na=c_n\widetilde{g_na}$, $g_{n-1}c_n=c_{n-1}\widehat{g_{n-1}c_n}$, $\ldots$, $g_1c_2=c_1\widehat{g_1c_2}$.
 
 If $a$ is not in $C$, then $\gamma \cdot a$ has the following normal form $c\cdot c_1 \widehat{g_1c_2}\widetilde{g_2c_3}
 \ldots\widehat{g_{n-1}c_n}$ $\widetilde{g_nc_{n+1}}\widehat{a}$, with $a=c_{n+1}\widehat{a}$ and the obvious equations for the rest. 
\end{itemize}
 
A product in reduced form, $g_1\cdot\ldots\cdot g_n$, is called {\em cyclically reduced}, 
if any cyclic permutation of the $g_i$'s gives a product in reduced form. Equivalently $g_1\cdot\ldots\cdot g_n$ 
is cyclically reduced if $n=1$ or $n$ is even. We moreover have:

\begin{thm}[Conjugacy Theorem for Amalgamated Free Products]\label{NormConjForm}
Every element of $G$ is conjugate to an element that can be represented as a product in a cyclically reduced form. 

Moreover, if $g:=g_1\cdot\ldots\cdot g_n, h:=h_1\cdot\ldots\cdot h_m$ are products in cyclically reduced form, which are conjugates in $G$. 
Then:
\begin{itemize}
 \item[(i)] if $n=1$ and $g\in (A\cup B)\setminus C$, 
 then $m=1$, $h$ belongs to the same factor as $g$ and they are conjugates by an element of this factor;
 \item[(ii)] if $n=1$ and $g\in C$, then $m=1$ and there is a sequence of elements $g,g_1,\ldots, g_l, h$ 
 where $g_i\in C$ and consecutive elements in the sequence are conjugates in a factor;
 \item[(iii)] if $n>1$, then $n=m$ and 
 $h$ can be obtained from $g$ by a cyclic permutation of $g_1,\ldots, g_n$ and then conjugation by an element of $C$. 
\end{itemize} 
\end{thm}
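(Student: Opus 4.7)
The first assertion is proved by induction on the length $n$ of a reduced form $g_1 \cdots g_n$ of $g$. If this form is not already cyclically reduced, then $g_1$ and $g_n$ must lie in the same factor (so $n$ is odd, hence at least $3$). Conjugating by $g_1^{-1}$ yields $g_2 \cdots g_n g_1$, and the last two letters collapse into a single element of that factor, producing a reduced form of length at most $n-1$ (and length $n-2$ in the case $g_n g_1 \in C \cup f(C)$, when one further absorption occurs via the amalgamation). Iterating drives the length strictly down until we arrive at a cyclically reduced conjugate.

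For the moreover statement, fix $x \in G$ with $x^{-1} g x = h$ and write $x$ in normal form $x = c \cdot x_1 \cdots x_k$ as in Theorem \ref{NormForm}. The plan is to induct on $k$. For case (iii), where $n \geq 2$, I would analyse the reduced form of $x_k^{-1} g x_k$ using the explicit normal-form computations recalled just before Theorem \ref{NormForm}. The key observation is the following dichotomy: either $x_k^{-1} g_1$ and $g_n x_k$ both avoid $C \cup f(C)$, so $x_k^{-1} g x_k$ has reduced length $n+2$ (if $x_k$ lies in the same factor as $g_n$) or length $n$ corresponding to a cyclic shift of $g$ conjugated by an element of $C$ (if in the same factor as $g_1$); or one of these products falls into $C \cup f(C)$, which produces a cyclic shift with an absorbed element of $C$. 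In the genuinely length-increasing scenario, successive conjugations by $x_{k-1}, \ldots, x_1$ can only further increase the reduced length, contradicting the equality with $h$ of length $m$. Thus only the absorption scenarios actually occur, and applying the inductive hypothesis to $c \cdot x_1 \cdots x_{k-1}$ realises $h$ as a cyclic permutation of $g$ conjugated by an element of $C$.

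Cases (i) and (ii) follow from the same machinery. For (i), with $n=1$ and $g \in (A \cup B) \setminus C$, say $g \in A$, an analogous computation shows that $x^{-1} g x$ has reduced length strictly greater than $1$ unless every $x_i$ successively collapses the length back down by interaction with $g$; since $g \notin C$, this forces $k = 0$ and $x \in A$. For (ii), with $g \in C$, at each reduction step the absorbed element remains in $C$ and is conjugated alternately within $A$ and $B$, producing the chain of intermediate elements claimed. The main obstacle is the bookkeeping in case (iii): one has to track, through the Normal Form Theorem, how the coset representatives change at each step and verify that every collapse involves only elements of $C$, so that the cumulative conjugator can indeed be taken in $C$. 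This case analysis, though routine, is the heart of the argument and must be organised carefully so that the inductive hypothesis matches at the end of each step.
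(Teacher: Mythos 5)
The paper does not prove Theorem~\ref{NormConjForm}; it is quoted as a classical fact with references to Lyndon--Schupp and Magnus--Karrass--Solitar. Your sketch follows the standard combinatorial argument from those sources: conjugate by one factor element at a time, observe the dichotomy between absorption (yielding a cyclic permutation conjugated by an element of $C$) and a genuine increase of reduced length, and use the alternation of factors in the normal form of the conjugator to argue that once a non-absorbing step occurs no later step can cancel, forcing the final reduced form to have odd length greater than one and hence not be cyclically reduced, a contradiction.

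Three details need repair, though none is a fatal gap. (1) The peeling order is reversed: with $x = c\,x_1\cdots x_k$ as in Theorem~\ref{NormForm}, the innermost conjugation in $x^{-1}gx$ is by $c$, then by $x_1$, and so on, with $x_k$ outermost. Peeling $x_k$ first does not produce the decomposition $h=(c\,x_1\cdots x_{k-1})^{-1}(x_k^{-1}gx_k)(c\,x_1\cdots x_{k-1})$ that your inductive step requires; one should start from $c^{-1}gc$ (which does not change the cyclically reduced length) and then proceed through $x_1,\ldots,x_k$, or equivalently use the normal form with the $C$-coefficient on the right. (2) In the non-absorbing case the reduced length of a single-element conjugate of a cyclically reduced word of length $n\geq 2$ is $n+1$, not $n+2$: since $g_1$ and $g_n$ lie in different factors, the conjugator combines with exactly one of them and is simply appended at the other end. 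The crucial invariant is that the first and last letters of the resulting word then both lie in the factor of the conjugator, while the next $x_i$ lies in the opposite factor, so no further cancellation is possible and the length strictly increases at every remaining step. (3) In case (i), the correct conclusion is that $x$ lies in the same factor as $g$; asserting $k=0$ is too strong, since one may have $k=1$ with $x_1$ in the factor of $g$ and no length increase. With these fixes the argument closes and matches the cited references; alternatively, part (iii) can be read off from translation lengths and axes in the Bass--Serre tree of $A*_C B$, in the spirit of Remark~\ref{ExtPseudo}.
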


\subsection{Pseudo-Anosov homeomorphisms}
A homeomorphism, $h$, of a (compact) surface $\Sigma$ is called {\em pseudo-Anosov} if there exist a pair of transverse 
measured foliations, $(\mathcal{F}^u,\mu_u), (\mathcal{F}^s,\mu_s)$ and a real number $\lambda>1$, such that 
$h$ ``respects'' the foliations in the following sense: 
$$h.(\mathcal{F}^u,\mu_u)=(\mathcal{F}^u,\lambda\cdot\mu_u)\ \textrm{and} \ h.(\mathcal{F}^s,\mu_s)=(\mathcal{F}^s,\lambda^{-1}\cdot\mu_s)$$ 

The (isotopy) classes of pseudo-Anosov homeomorphisms play an important role in the study of the mapping class group $\mathcal{MCG}(\Sigma)$ of a 
(compact) surface $\Sigma$, i.e. the group of isotopy classes of orientation preserving homeomorphisms of $\Sigma$ (fixing the boundary components pointwise). 
Let us also note that examples of pseudo-Anosov homeomorphisms have been first consider by Nielsen (see \cite{NielsenI,NielsenII,NielsenIII}) but 
more sytematically studied after the work of Thurston (see \cite{ThurstonPseudo}), where he stated the following celebrated theorem.

\begin{thm}[Nielsen-Thurston classification theorem]
Let $\Sigma$ be a (compact) surface. Let $h\in\mathcal{MCG}(\Sigma)$. Then $h$ is either periodic, or reducible or 
pseudo-Anosov.
\end{thm}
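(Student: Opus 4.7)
The plan is to follow Thurston's original topological approach via the action of $\mathcal{MCG}(\Sigma)$ on Teichm\"uller space and its natural compactification. First I would work with Teichm\"uller space $\mathcal{T}(\Sigma)$, the space of marked complete finite-area hyperbolic structures on $\Sigma$ modulo isotopy, which is homeomorphic to an open ball of real dimension $6g-6+2n$ for a surface of genus $g$ with $n$ punctures or boundary components; the group $\mathcal{MCG}(\Sigma)$ acts on it continuously (and properly discontinuously with finite stabilizers) by change of marking.

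Second, I would invoke Thurston's compactification, attaching the boundary $\mathcal{PMF}(\Sigma)$ of projective classes of measured foliations so that $\mathcal{T}(\Sigma)\cup\mathcal{PMF}(\Sigma)$ is a closed ball on which the $\mathcal{MCG}(\Sigma)$-action extends continuously. This is done by simultaneously embedding $\mathcal{T}(\Sigma)$ and $\mathcal{PMF}(\Sigma)$ into the projective space $\mathbb{P}(\R^{\mathcal{S}})$, where $\mathcal{S}$ is the set of isotopy classes of essential simple closed curves, via hyperbolic length functions and geometric intersection numbers respectively. Brouwer's fixed point theorem then supplies at least one fixed point $p$ for the homeomorphism of this closed ball induced by $h$.

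Third, I would perform a case analysis on the location of $p$. If $p\in\mathcal{T}(\Sigma)$, then $h$ fixes a hyperbolic structure up to isotopy, and since stabilizers of the action are finite (they coincide with the orientation-preserving isometry groups of the corresponding hyperbolic surfaces), $h$ is periodic. If $p\in\mathcal{PMF}(\Sigma)$ and the underlying measured foliation is representable by a weighted simple multicurve (equivalently, it has a closed leaf), then that multicurve is $h$-invariant up to isotopy and $h$ is reducible. Otherwise $p$ corresponds to an arational measured foliation $(\mathcal{F}^u,\mu_u)$, and applying the same fixed-point argument to $h^{-1}$, or equivalently using continuity of the intersection pairing $i(\cdot,\mathcal{F}^u)$, produces a second fixed projective class $(\mathcal{F}^s,\mu_s)$. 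Checking that the two foliations are transverse and that $h$ stretches their transverse measures by reciprocal factors $\lambda>1$ and $\lambda^{-1}$ then yields a pseudo-Anosov representative in the isotopy class of $h$.

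The heart of the argument, and the step I expect to be the real obstacle, is the arational case. One must both locate the second invariant projective class and verify the no-common-leaves property that makes the pair $(\mathcal{F}^u,\mathcal{F}^s)$ genuinely transverse; modern treatments package this combinatorially via train tracks and Markov partitions in order to upgrade the data $(\mathcal{F}^u,\mu_u,\mathcal{F}^s,\mu_s,\lambda)$ into an honest homeomorphism with the required stretching behavior in the right isotopy class. The periodic case also hides a subtlety: passing from ``$h$ fixes a point of $\mathcal{T}(\Sigma)$'' to ``$h$ is represented by a finite-order diffeomorphism'' is the content of the Nielsen realization theorem, but the weaker statement that $h$ has finite order in $\mathcal{MCG}(\Sigma)$, which is all the present classification requires, follows immediately from proper discontinuity of the action.
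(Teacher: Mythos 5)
The paper does not prove the Nielsen--Thurston classification at all; it records it as known background and cites Thurston's announcement \cite{ThurstonPseudo}, so there is no in-paper proof to compare against. On its own merits your sketch is the standard Thurston proof via the compactification of Teichm\"uller space by $\mathcal{PMF}(\Sigma)$ and Brouwer's fixed-point theorem, and you correctly flag the two genuinely hard points: producing the second (stable) invariant projective foliation and verifying transversality in the arational case, and that periodicity of $h$ needs only proper discontinuity of the action rather than the full Nielsen realization theorem. Two inaccuracies are worth fixing. First, in the reducible case your dichotomy is phrased as ``representable by a weighted simple multicurve (equivalently, it has a closed leaf)''; this is not an equivalence, and neither condition is the right complement of arationality. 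The correct split is arational (every essential simple closed curve $c$ has $i(c,\mathcal{F})>0$) versus non-arational; in the non-arational case the finite, $h$-invariant set of isotopy classes of essential curves with $i(c,\mathcal{F})=0$ yields a reducing system, even though $\mathcal{F}$ itself may have minimal components and need not be a multicurve nor have a closed leaf. Second, the Teichm\"uller-theoretic machinery requires $\chi(\Sigma)<0$ and enough complexity for $\mathcal{PMF}(\Sigma)$ to be nonempty, so the sporadic surfaces (sphere with few marked points, disk, annulus, torus with at most one marked point) must be excluded or handled separately, e.g.\ via $SL_2(\Z)$ for the torus; the paper's blanket ``(compact) surface'' quietly presumes this.
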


For motivating the definition of a pseudo-Anosov one might consider the case of an Anosov homeomorphisms of the torus. 
We identify the torus with $\R^2/\Z^2$. The mapping class group of the torus is isomorphic to $SL_2(\Z)$ and an Anosov homeomorphism 
would be a matrix $A\in SL_2(\Z)$ with $\abs{trace(A)}>2$. For such a matrix we have two real eigenvalues, $\lambda>1$ and $\lambda^{-1}$, and 
the corresponding eigenlines in $\R^2$ have irrational slope. Moreover, one of the eigenlines is ``streched'' by a factor of $\lambda$ 
while the other is ``contracted'' by a factor of $\lambda^{-1}$. For each eigenline, the lines parallel to it form a foliation of $\R^2$ and 
the two foliations corresponding to the distinct eigenlines are transverse at each point. 
Since $\Z^2$ acts on the set of parallel lines, the foliations project to foliations of the torus, where each ``leaf'', i.e. the image 
of a line, is dense in $T^2$ and $A$ leaves each of the foliations invariant. For a more thorough exposition of the above notions
and results we refer the reader to \cite[Chapter 13]{Primer} or \cite{ThurstonSurface}.

We now collect some useful properties of pseudo-Anosov homeomorphisms, we believe well known, in the following theorem. 
We still sketch a proof which will be rather quick and hard to follow for the reader lacking geometric background. 

\begin{thm}\label{Pseudo}
Let $\Sigma_{g,1}$ be the orientable surface of genus $g$ with connected (non-empty) boundary component. 
Let $\pi_1(\Sigma_{g,1},*)$ be the fundamental group of $\Sigma_{g,1}$ with respect to the base point $*$, and 
let $B$ be a maximal boundary subgroup.

Suppose $h$ is a pseudo-Anosov homeomorphism of $\Sigma_{g,1}$ 
and $[h_*]$ is the corresponding outer automorphism of $\pi_1(\Sigma_{g,1},*)$. 
Then:
\begin{itemize}
 \item[(i)] if $a\in\pi_1(\Sigma_{g,1},*)$ cannot be conjugated to an element in the boundary subgroup $B$, then 
 $\{[h_*]^k.[a] \ | \ k\in\omega\}$ is infinite, where $[a]$ denotes the conjugacy class of $a$;
 \item[(ii)] if $h_*\in [h_*]$ is an automorphism of $\pi_1(\Sigma_{g,1},*)$ that fixes the boundary subgroup $B$, then 
 the orbit of double cosets $B.a.B$ under powers of $h_*$, 
 $\{B.h_*^k(a).B \ | \ k\in\omega\}$, is infinite for any $a\not\in B$.
\end{itemize}
Moreover, both (i) and (ii) hold for any infinite subsequence of powers of $[h_*]$. 
\end{thm}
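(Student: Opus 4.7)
The plan is to translate both statements into the dynamics of $h$ on homotopy classes of curves and arcs on $\Sigma_{g,1}$, and then invoke the classical fact that a pseudo-Anosov map has no periodic essential curves or essential arcs. For (i), conjugacy classes in $\pi_1(\Sigma_{g,1},*)$ are in natural bijection with free homotopy classes of unbased loops on $\Sigma_{g,1}$; under this bijection, $a$ is conjugate into $B$ iff the corresponding loop is peripheral, and $[h_*]^k.[a]$ corresponds to the free homotopy class of $h^k(\alpha)$, where $\alpha$ is any loop representing $[a]$. For (ii), after placing $*$ on $\partial\Sigma$ so that $B$ is genuinely the boundary cycle based at $*$, the double cosets $B\backslash \pi_1(\Sigma_{g,1},*)/B$ are in natural bijection with free homotopy classes of arcs from $\partial\Sigma$ to itself whose endpoints are allowed to slide freely along $\partial\Sigma$; the trivial double coset corresponds to boundary-parallel arcs, so $a\notin B$ corresponds to an essential arc. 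The hypothesis that the chosen representative $h_*$ fixes $B$ setwise (not merely the outer class of $h_*$ preserving the conjugacy class of $B$) is exactly what allows $h$ to be chosen so as to preserve $\partial\Sigma$ pointwise enough for the induced action on such arcs to be well defined.

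With these translations in hand, (i) and (ii) reduce respectively to: a pseudo-Anosov $h$ has no periodic essential loop, respectively no periodic essential arc. This is a standard consequence of Thurston's theory. Equip $\Sigma_{g,1}$ with a hyperbolic metric with geodesic boundary. For (i), the geodesic length of $h^k(\alpha)$ grows as $\lambda^k \cdot i(\alpha,\mathcal{F}^s)$ up to multiplicative constants, where $\lambda>1$ is the dilatation and $\mathcal{F}^s$ is the stable measured foliation of $h$; since $i(\alpha,\mathcal{F}^s)>0$ precisely when $\alpha$ is non-peripheral, the orbit $\{[h^k(\alpha)]\}_{k\in\omega}$ is infinite. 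Equivalently, on the sphere $\mathcal{PMF}(\Sigma_{g,1})$, $h$ acts with north--south dynamics with attractor $[\mathcal{F}^u]$, and any non-peripheral weighted curve is attracted to $[\mathcal{F}^u]$, so its forward orbit is infinite. For (ii), the same dynamical expansion holds for essential arcs (they admit unique geodesic representatives perpendicular to $\partial\Sigma$ at their endpoints, whose lengths grow like $\lambda^k$), so the orbit of any essential arc is infinite; see \cite[Chapter 14]{Primer} for the classical treatment.

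Finally, the \emph{moreover} clause is purely formal. If $[h_*]^k.[a] = [h_*]^l.[a]$ for some $k<l$, then $[a] = [h_*]^{l-k}.[a]$, so the entire orbit consists of at most $l-k$ elements, contradicting its being infinite. Hence the map $k\mapsto [h_*]^k.[a]$ is injective on $\omega$, and every infinite subsequence of powers produces infinitely many distinct classes; the identical argument handles the double-coset version. The main obstacle, beyond citing the right dynamical input, is the bookkeeping for these translations, in particular the basepoint conventions in (ii) and the precise identification of $B\backslash\pi_1(\Sigma_{g,1},*)/B$ with free homotopy classes of boundary-to-boundary arcs; the expansion statements themselves are classical.
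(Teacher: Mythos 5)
Your proof is correct, but it takes a genuinely different route from the paper, most notably for part~(ii) and for the ``moreover'' clause. For~(i) the two proofs are essentially the same fact dressed differently: the paper phrases the dynamical expansion as uniform exponential growth of the cyclically reduced word length, $\abs{\widehat{h_*^k(a)}}_{\F_{2g}}\sim C_a\cdot\lambda^k$, whereas you phrase it via hyperbolic geodesic lengths and $i(\alpha,\mathcal{F}^s)>0$ for non-peripheral $\alpha$; these are quasi-isometrically equivalent statements. For~(ii) the approaches genuinely diverge. You translate double cosets $B\backslash\pi_1/B$ into homotopy classes of boundary-to-boundary arcs (with endpoints free to slide) and invoke the classical fact that a pseudo-Anosov has no periodic essential arc. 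The paper instead runs a Bestvina--Paulin degeneration: it extracts a limiting action of $\pi_1(\Sigma_{g,1})$ on a real tree $(T,x)$ from the sequence $(h_*^k)$, observes that $B$ fixes exactly the basepoint $x$ while non-peripheral elements are hyperbolic with translation lengths $tr_T(h_*^k(a))\to\infty$, and derives a contradiction from $tr_T(h_*^k(a))=tr_T(ab^{m_k-n_k})$ being bounded by $d(x,a\cdot x)$. Your approach stays inside classical surface topology and is arguably more self-contained given a reference like Farb--Margalit; the paper's approach is more geometric-group-theoretic and meshes naturally with the Bass--Serre/real-tree machinery used elsewhere in the paper. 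Your handling of the ``moreover'' clause is also cleaner: you observe that since $h_*$ fixes $B$, any coincidence $[h_*]^k.[a]=[h_*]^l.[a]$ forces the whole forward orbit to be finite, so infiniteness of the full orbit already implies injectivity of $k\mapsto [h_*]^k.[a]$ and hence infiniteness along every infinite subsequence; the paper instead re-runs the growth estimate and the Bestvina--Paulin construction for the subsequence.

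One small point worth making explicit in your write-up: the bijection you use identifies double cosets with homotopy classes of \emph{paths} from $\partial\Sigma$ to $\partial\Sigma$ (endpoints sliding freely on the boundary), not merely with isotopy classes of simple arcs. The expansion statement for pseudo-Anosovs that you cite is usually stated for simple arcs, but the length-growth argument via the transverse measures applies to arbitrary boundary-to-boundary geodesic paths just as well, so the conclusion still holds; it is worth saying a word about this so the reduction is airtight.
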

\begin{proof}[Sketch of proof]

For both parts of the theorem we will use the following fact: Any element, $a\in\pi_1(\Sigma_{g,1})$, 
that cannot be conjugated to an element in $B$ 
has uniform exponential growth under powers of $h_*$, i.e. there exists a ``stretching factor'' $\lambda_{h_*}>1$ 
such that:
$$\abs{\widehat{h_*^k(a)}}_{\F_{2g}}\sim C_a\cdot\lambda^k_{h_*}$$ 
where $C_a$ is a constant depending only on the element $a$ (and the choice of the generating set for $\F_{2g}$), 
and $\hat{a}$ denotes the cyclically reduced element (up to cyclic permutation) in the conjugacy class of $a$. 

Part $(i)$ follows immediately from the above fact. 


For $(ii)$, we consider the action of $\pi_1(\Sigma_{g,1})$ on a based real tree $(T,x)$ obtained by 
the Bestvina-Paulin method (see \cite{BesDeg,PaulinGromov}) 
from the sequence of automorphisms $(h_*^k)_{k\in\omega}:\pi_1(\Sigma_{g,1})\to\F_{2g}$ (or any infinite subsequence). Using 
the above fact one can easily verify the following properties of the limiting action. First, 
if an element $a\in\pi_1(\Sigma_{g,1})$ cannot be conjugated to an element in $B$, then $a$ acts hyperbolically on $T$, moreover 
the translation length of $h_*^k(a)$, $tr_T(h_*^k(a))$, goes to infinity, as $k\to\infty$. Second, any non trivial 
element of $B$ fixes exactly $x$.

Now, suppose for the sake of contradiction, that for some $a\not\in B$ we have that $B.h_*^k(a).B$ $=B.a.B$ for 
arbitrarily large $k$. Then, we clearly have that  
$tr_T(h_*^k(a))=tr_T(b^{n_k}ab^{m_k})=tr_T(ab^{m_k-n_k})$ and $\abs{m_k-n_k}\to\infty$, as $k\to\infty$. We take cases: 
\begin{itemize}
 \item[Case 1] Suppose $a$ can be conjugated to an element in $B$, then $a$ fixes a point in $T$ which is 
 different from $x$. Thus, $tr_T(a\cdot b^{m_k-n_k})>0$, but $tr_T(h_*^k(a))=tr(a)=0$, a contradiction;
 \item[Case 2] Suppose $a$ cannot be conjugated to an element in $B$. Then, $tr_T(h_*^k(a))\to\infty$ as $k\to\infty$, 
 but $tr_T(ab^{m_k-n_k})$ is bounded by $d(x,a\cdot x)$, a contradiction.
\end{itemize}
\end{proof}

The same is true for non-orientable surfaces.

\begin{thm}\label{PseudoNonOrien}
Let $\Pi_{n,1}$ be the connected sum of $n$ projective planes with connected (non-empty) boundary component. 
Let $\pi_1(\Pi_{n,1},*)$ be the fundamental group of $\Pi_{n,1}$ with respect to the base point $*$, and 
let $B$ be a maximal boundary subgroup.

Suppose $h$ is a pseudo-Anosov homeomorphism of $\Pi_{n,1}$ 
and $[h_*]$ is the corresponding outer automorphism of $\pi_1(\Pi_{n,1},*)$. 
Then:
\begin{itemize}
 \item[(i)] if $a\in\pi_1(\Pi_{n,1},*)$ cannot be conjugated to an element in the boundary subgroup $B$, then 
 $\{[h_*]^k.[a] \ | \ k\in\omega\}$ is infinite, where $[a]$ denotes the conjugacy class of $a$;
 \item[(ii)] if $h_*\in [h_*]$ is an automorphism of $\pi_1(\Pi_{n,1},*)$ that fixes the boundary subgroup $B$, then 
 the orbit of double cosets $B.a.B$ under powers of $h_*$, 
 $\{B.h_*^k(a).B \ | \ k\in\omega\}$, is infinite for any $a\not\in B$.
\end{itemize}
Moreover, both (i) and (ii) hold for any infinite subsequence of powers of $[h_*]$. 
\end{thm}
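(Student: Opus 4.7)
The plan is to repeat the proof of Theorem \ref{Pseudo} \emph{verbatim}. That proof relied on exactly two facts, and neither depends on orientability. The first is the uniform exponential growth statement: for every $a\in\pi_1(\Pi_{n,1})$ not conjugate into $B$, one has $|\widehat{h_*^k(a)}|_{\F_n}\sim C_a\cdot\lambda_{h_*}^k$ for some stretching factor $\lambda_{h_*}>1$. This is an immediate consequence of the invariant transverse measured foliations provided by the definition of pseudo-Anosov, which makes perfect sense on any compact surface, and the derivation of length growth from foliation data makes no use of orientability. The second is the output of the Bestvina-Paulin construction applied to the sequence $(h_*^k):\pi_1(\Pi_{n,1})\to\F_n$: a based $\R$-tree $(T,x)$ on which $B$ fixes exactly $x$, while elements not conjugate into $B$ act hyperbolically with translation length tending to infinity under powers of $h_*$. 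This is a purely group-theoretic consequence of the length-growth statement and is again insensitive to orientability.

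Given these two inputs, part $(i)$ is immediate: unbounded growth of $|\widehat{h_*^k(a)}|$ forces the orbit of conjugacy classes to be infinite. For part $(ii)$ the argument is word-for-word that of Theorem \ref{Pseudo}. Using that $B\cong\Z$ (the non-orientable surface $\Pi_{n,1}$ has a single boundary circle, so the maximal boundary subgroup is cyclic, generated by some $b$), a hypothetical equality $B.h_*^k(a).B=B.a.B$ for arbitrarily large $k$ yields $h_*^k(a)=b^{n_k}ab^{m_k}$ with $|m_k-n_k|\to\infty$, hence $tr_T(h_*^k(a))=tr_T(ab^{m_k-n_k})$. Splitting according to whether $a$ is conjugate into $B$ gives, in the first case, $tr_T(h_*^k(a))=0$ against $tr_T(ab^{m_k-n_k})>0$ (since $a$ fixes a point of $T$ distinct from $x$), and in the second case $tr_T(h_*^k(a))\to\infty$ against $tr_T(ab^{m_k-n_k})$ bounded by $d(x,a\cdot x)$, a contradiction either way. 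The concluding remark about arbitrary infinite subsequences holds because every input is stated at the level of subsequences of $(h_*^k)$.

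The only step that is not literally a transcription of the orientable proof is the verification of uniform exponential growth on $\Pi_{n,1}$. This is classical — once one has invariant measured foliations with expansion factor $\lambda>1$ the stretching estimate for cyclically reduced length follows by the same argument as in the orientable case — and so I do not foresee any genuine obstacle. If there is one subtle point to watch, it is only the orientability-sensitive behaviour of one-sided simple closed curves in $\Pi_{n,1}$, but since such curves do not affect the translation-length dynamics of the Bestvina-Paulin limit (only the foliation stretching factor enters), the argument is unaffected.
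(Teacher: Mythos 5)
Your proposal is correct and matches the paper's approach: the paper itself gives no separate argument for the non-orientable case, merely remarking ``the same is true for non-orientable surfaces'' immediately before the statement. Your write-up makes explicit what that remark leaves implicit, namely that both ingredients of the proof of Theorem~\ref{Pseudo} --- the uniform exponential growth estimate for cyclically reduced length and the Bestvina--Paulin limit tree with its translation-length dichotomy --- are insensitive to orientability, and then runs the argument verbatim.
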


We note that most surfaces support pseudo-Anosov homeomorphisms.

\begin{fact}[cf. \cite{PennerPseudo}]
Let $\Sigma$ be either the torus with connected boundary or a (possibly non-orientable) surface with Euler characteristic at most $-2$, 
then it carries a pseudo-Anosov homeomorphism. 
\end{fact}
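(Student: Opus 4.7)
The plan is to invoke the classical construction of pseudo-Anosov homeomorphisms via Dehn twists along a pair of filling curves, due to Thurston in the orientable case and generalized by Penner to both multicurves and non-orientable surfaces. Recall the criterion: if $\alpha$ and $\beta$ are (multi)curves on $\Sigma$ consisting of essential, simple, two-sided curves whose union fills $\Sigma$ (i.e.\ every essential simple closed curve has nontrivial geometric intersection with $\alpha\cup\beta$), then the composition $T_\alpha\circ T_\beta^{-1}$, where $T_\gamma$ denotes the Dehn twist along $\gamma$, is pseudo-Anosov; more generally any word in $T_{\alpha_i}$ and $T_{\beta_j}^{-1}$ using each factor at least once yields a pseudo-Anosov element. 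Thus the task reduces to exhibiting, for each surface $\Sigma$ appearing in the statement, such a filling configuration of two-sided curves.

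First I would handle the once-holed torus $\Sigma_{1,1}$: take $\alpha,\beta$ to be the two standard generating loops of $H_1$; they intersect transversally once and fill, so $T_\alpha T_\beta^{-1}$ is pseudo-Anosov. Next, for an orientable surface $\Sigma_{g,b}$ with $\chi(\Sigma)\leq -2$, the standard approach is to build a chain of simple closed curves $c_1,\ldots,c_{k}$ where consecutive curves intersect once and non-consecutive curves are disjoint (a chain adapted to the surface's complexity), and then set $\alpha=c_1\cup c_3\cup\cdots$ and $\beta=c_2\cup c_4\cup\cdots$. A short topological check (complementary components are disks or annular neighborhoods of boundary) shows this pair fills whenever the complexity $3g-3+b$ is sufficiently large, which is exactly guaranteed by the Euler characteristic bound.

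For non-orientable surfaces $\Pi_{n,b}$ with $\chi\leq -2$, the same strategy applies, but one must be careful to use only two-sided simple closed curves (Dehn twists are not defined along one-sided curves). Penner's paper gives an explicit construction of filling pairs of two-sided curves on each such surface; the role of the hypothesis $\chi\leq -2$ is precisely to guarantee enough ``room'' for two disjoint two-sided curves whose union separates off only disks and boundary-parallel annuli from the surface.

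The main obstacle I anticipate is the non-orientable case: ruling out low-complexity surfaces (such as $\Pi_{1,0}$, $\Pi_{2,0}$, $\Pi_{1,1}$, the projective plane with a disk removed) where no two-sided filling pair can exist, and verifying existence in the boundary cases like $\Pi_{3,1}$ and $\Pi_{4,0}$ where the chain construction is tight. This case analysis is essentially the technical content of Penner's paper, and I would simply cite it rather than reproduce the explicit diagrams; alternatively, one can bootstrap from the orientable case by passing to the orientation double cover and descending an equivariant pseudo-Anosov, checking that the deck involution preserves the filling pair.
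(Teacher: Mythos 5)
The paper does not actually prove this fact; it simply cites Penner's paper, so there is no proof in the text for your attempt to diverge from. Your outline correctly reproduces the content behind that citation: the Thurston--Penner criterion that a word (using each twist at least once) in positive Dehn twists along one multicurve and negative twists along a transverse filling multicurve of two-sided curves is pseudo-Anosov, applied to the once-holed torus and to chains of curves on higher-complexity surfaces, with the non-orientable cases handled either directly via Penner's own two-sided-curve constructions or by descending an equivariant pseudo-Anosov from the orientation double cover. The one place where some care is needed (and which you flag) is exactly where Penner's paper does the work: exhibiting two-sided filling configurations on the low-complexity non-orientable surfaces at the boundary of the Euler-characteristic bound, and if you take the double-cover route instead, checking that the twists you compose downstairs lift to a map upstairs that commutes with the deck involution (twist along the full preimage of each curve) so that the pseudo-Anosov structure descends. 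Since these are the details the paper outsources to \cite{PennerPseudo}, your proposal is a faithful unpacking of the citation rather than a competing proof.
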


\section{Witnessing Ampleness}\label{3}
In this section we prove the main result of the paper. We will show that the following sequence in 
$\F_{\omega}:=\langle e_1,e_2,\ldots,e_k,\ldots\rangle$ witnesses $n$-ampleness, 
for any $n\in\omega$ (after adding $e_1,e_2$ as parameters). We give the sequence recursively:

$$a_0=e_3$$ 
$$a_{i+1}=a_i[e_{2i+4},e_{2i+5}], \ \textrm{for} \ i\in\omega$$

We fix a natural number $n\geq 1$, and we show that $a_0,\ldots, a_n$ witnesses $n$-ampleness 
by verifying the requirements of Definition \ref{Ample}. 

We can now proceed with the proofs of the first three requirements of Definition \ref{Ample}.

\begin{prop}\label{D1}
$a_0=e_3$ forks with $a_n=e_3[e_4,e_5]\ldots[e_{2n+2},e_{2n+3}]$ over $e_1,e_2$.
\end{prop}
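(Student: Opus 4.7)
The plan is to derive a contradiction from the assumption that $a_0$ is independent from $a_n$ over $\{e_1, e_2\}$: under that assumption, uniqueness of the generic type forces $\tp(a_0, a_n / e_1 e_2) = \tp(e_3, e_4 / e_1 e_2)$, whereas a parameter-free formula detects that $a_0^{-1} a_n$ lies in the commutator subgroup of $\F_{\omega}$ while $e_3^{-1} e_4$ does not.

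First I would verify that $a_n$ is primitive in $\F_{\omega}$: the endomorphism fixing every $e_i$ with $i \neq 3$ and sending $e_3 \mapsto a_n$ is a Nielsen automorphism, its inverse sending $e_3$ to $e_3 [e_4, e_5]^{-1} \cdots [e_{2n+2}, e_{2n+3}]^{-1}$. Consequently both $\{e_1, e_2, a_0, e_4, \ldots, e_{2n+3}\}$ and $\{e_1, e_2, a_n, e_4, \ldots, e_{2n+3}\}$ are bases of $\F_{2n+3}$. Applying Theorem \ref{ForkPil} to each of these bases, together with the connectedness of $T_{fg}$ (unique generic type over every parameter set), both $a_0$ and $a_n$ realize the unique generic type over $\{e_1, e_2\}$.

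Next, assume for contradiction that $a_0$ does not fork with $a_n$ over $\{e_1, e_2\}$. Then $\tp(a_n / e_1 e_2 a_0)$ is the non-forking extension of a generic type, hence itself the unique generic type over $\{e_1, e_2, a_0\} = \{e_1, e_2, e_3\}$. Applying Theorem \ref{ForkPil} once more to the basis $\{e_1, e_2, e_3, e_4, \ldots, e_{2n+3}\}$, the element $e_4$ also realizes the generic type over $\{e_1, e_2, e_3\}$. Therefore $\tp(a_0, a_n / e_1 e_2) = \tp(e_3, e_4 / e_1 e_2)$.

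Finally, consider the parameter-free formula
$$\phi(x, y) := \exists z_1, \ldots, z_{2n} \, \bigl( x^{-1} y = [z_1, z_2][z_3, z_4] \cdots [z_{2n-1}, z_{2n}] \bigr).$$
By construction $a_0^{-1} a_n = [e_4, e_5] \cdots [e_{2n+2}, e_{2n+3}]$, so $\phi(a_0, a_n)$ holds in $\F_{\omega}$; on the other hand $e_3^{-1} e_4$ has non-trivial image in the abelianization of $\F_{\omega}$, hence lies outside $[\F_{\omega}, \F_{\omega}]$, so $\phi(e_3, e_4)$ fails. This contradicts the equality of types above, and hence $a_0$ must fork with $a_n$ over $\{e_1, e_2\}$. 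The most delicate point to check carefully is the identification of the unique generic type over $\{e_1, e_2, a_0\}$ with the one realized by $e_4$; this is where Theorem \ref{ForkPil} combined with connectedness of $T_{fg}$ does all the work, after which the remainder is a direct computation.
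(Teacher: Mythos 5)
Your proof is correct and arrives at the same obstruction as the paper---the abelianization detects that a product of commutators cannot play the role that the type equality would force it to play---but the middle of the argument is routed differently. The paper's proof, once it has $a_n$ generic over $\{e_1,e_2,e_3\}$, simply left-translates by $e_3^{-1}\in\langle e_1,e_2,e_3\rangle$ (translation by an element of the base preserves genericity) to conclude that $[e_4,e_5]\cdots[e_{2n+2},e_{2n+3}]$ is generic over $\{e_1,e_2,e_3\}$, and then quotes Theorem \ref{GenPil} (generics are primitive) to get the contradiction in one line. You avoid Theorem \ref{GenPil} altogether: instead you invoke connectedness of $T_{fg}$ (uniqueness of the generic type) to identify $\tp(a_n/e_1e_2e_3)$ with $\tp(e_4/e_1e_2e_3)$, and then exhibit an explicit first-order formula $\phi(x,y)$ asserting that $x^{-1}y$ is a product of $n$ commutators in order to distinguish $\tp(a_0,a_n/e_1e_2)$ from $\tp(e_3,e_4/e_1e_2)$. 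This trades Pillay's primitivity-of-generics theorem for the (also nontrivial but separate) connectedness fact plus a hand-built distinguishing formula; both routes rest on Theorem \ref{ForkPil} and end in the same abelianization computation, and both are sound. The paper's version is marginally more direct because the translation step replaces your comparison with $e_4$ and your formula in a single stroke, but nothing in your proposal is a gap.
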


\begin{proof}
Suppose not. Note that since $a_n$ together with $e_1,e_2$ can be completed to form a basis of $\F_{2n+3}$, we have that 
$a_n$ is generic over $e_1,e_2$. But then, by our contradictory hypothesis, $a_n$ is generic over $e_1,e_2,e_3$. Consequently, 
$e_3^{-1}a_n=[e_4,e_5]$ $\ldots[e_{2n+2},e_{2n+3}]$ is generic over  
$e_1,e_2, e_3$ and, by Theorem \ref{GenPil}, $[e_4,e_5]\ldots$ $[e_{2n+2},e_{2n+3}]$ is a primitive element. This is 
a contradiction since a primitive element of $\F_n$ maps to a primitive element in the abelianization $\Z^n=\F_n/[\F_n,\F_n]$.
\end{proof}

\begin{prop}\label{D2}
Let $1\leq i<n$. Then $a_0,\ldots,a_{i-1}$ does not fork with $a_{i+1}$ over $e_1,e_2,a_i$. 
\end{prop}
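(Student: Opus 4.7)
The plan is to exploit the Nielsen/free-factor structure of $\F_{\omega}$ in order to place $(e_1,e_2,a_i)$ inside a basis of a finitely generated free factor, and then apply Pillay's Theorem \ref{ForkPil} together with the symmetry, transitivity and $\mathrm{dcl}$-invariance of non-forking in a stable theory.

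First I would perform a Nielsen change of basis. The map of $\F_{2i+5}:=\langle e_1,\ldots,e_{2i+5}\rangle$ fixing every $e_j$ with $j\neq 3$ and sending $e_3\mapsto a_i = e_3\,[e_4,e_5][e_6,e_7]\cdots[e_{2i+2},e_{2i+3}]$ is an automorphism, because $a_i$ and $e_3$ differ only by an element of $\langle e_4,\ldots,e_{2i+3}\rangle$. Consequently $\mathcal{B}:=\{e_1,e_2,a_i,e_4,e_5,\ldots,e_{2i+5}\}$ is a basis of $\F_{2i+5}$.

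Second, by Theorem \ref{ForkPil} (applied inside $\F_{2i+5}$, which is elementarily embedded in $\F_{\omega}$) the set $\mathcal{B}$ is an independent set of realizations of the unique generic type over $\emptyset$. Partitioning $\mathcal{B}$ into the three blocks $\{e_1,e_2,a_i\}$, $\{e_4,\ldots,e_{2i+3}\}$ and $\{e_{2i+4},e_{2i+5}\}$, iterated applications of transitivity and symmetry of forking show that $\{e_{2i+4},e_{2i+5}\}$ does not fork with $\{e_4,\ldots,e_{2i+3}\}$ over $\{e_1,e_2,a_i\}$.

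Third, each $a_j$ with $j<i$ is definable from $e_1,e_2,a_i,e_4,\ldots,e_{2i+3}$: one recovers $e_3 = a_i[e_{2i+2},e_{2i+3}]^{-1}\cdots[e_4,e_5]^{-1}=a_0$ and then $a_j = e_3\,[e_4,e_5]\cdots[e_{2j+2},e_{2j+3}]$. Hence $(a_0,\ldots,a_{i-1})\in \mathrm{dcl}(e_1,e_2,a_i,e_4,\ldots,e_{2i+3})$, and invariance of non-forking under $\mathrm{dcl}$-closure gives that $\{e_{2i+4},e_{2i+5}\}$ does not fork with $(a_0,\ldots,a_{i-1})$ over $\{e_1,e_2,a_i\}$. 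Finally, since $a_{i+1}=a_i\cdot[e_{2i+4},e_{2i+5}]\in\mathrm{dcl}(e_1,e_2,a_i,e_{2i+4},e_{2i+5})$, one symmetric application of $\mathrm{dcl}$-invariance yields the conclusion: $(a_0,\ldots,a_{i-1})$ does not fork with $a_{i+1}$ over $\{e_1,e_2,a_i\}$.

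The only conceptual step is to notice the change of basis $e_3\mapsto a_i$, which places the parameter $a_i$ into an honest basis of a free factor and thereby into the independent-sequence framework of Pillay's theorem; the remainder is purely formal forking calculus, so I do not expect any substantive obstacle.
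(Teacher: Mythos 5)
Your proof is correct and follows essentially the same route as the paper: both rest on the observation that $e_1,e_2,a_i$ extend to the basis $\{e_1,e_2,a_i,e_4,\ldots,e_{2i+5}\}$ of $\F_{2i+5}$, the resulting free factorization $\F_{2i+5}=\langle e_4,\ldots,e_{2i+3}\rangle*\langle e_1,e_2,a_i\rangle*\langle e_{2i+4},e_{2i+5}\rangle$, and Theorem~\ref{ForkPil}. You have simply unfolded the forking calculus (transitivity, symmetry, invariance under $\mathrm{dcl}$) that the paper's proof compresses into the one-line reduction ``by Theorem~\ref{ForkPil}, we only need to find a free factorization\ldots''.
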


\begin{proof}
We first note that for each $i$, $\langle e_1,e_2, a_i\rangle$ is a free factor of $\F_{2i+5}$, i.e. $e_1,e_2,a_i$ extends to a basis of $\F_{2i+5}$. 
Thus, by Theorem \ref{ForkPil}, we only need to find a 
free factorization $\F_{2i+5}=\F*\langle e_1,e_2, a_i\rangle* \F'$, such that $a_0,\ldots,a_{i-1}$ is in 
$\F*\langle e_1,e_2,a_i\rangle$ and $a_{i+1}$ is in $\langle e_1,e_2,a_i\rangle* \F'$. It is easy to see that the following free factorization is such: 
$$\F_{2i+5}=\langle e_4,e_5, \ldots, e_{2i+2},e_{2i+3}\rangle *\langle e_1,e_2, a_i\rangle * \langle e_{2i+4},e_{2i+5}\rangle$$
\end{proof}

\begin{prop}\label{D3}
$\F_3^{eq}\cap \langle e_1,e_2,e_3[e_4,e_5]\rangle^{eq}=\F_2^{eq}$. 
 \end{prop}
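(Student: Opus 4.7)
The plan is to combine weak elimination of imaginaries (Theorem \ref{WeakElim}) with normal-form analysis in the free product decomposition $\F_5 = \F_2 * K$ where $K := \langle e_3,e_4,e_5\rangle$, invoking the pseudo-Anosov dynamics of Theorem \ref{Pseudo} to control the non-trivial cases. Write $H := \langle e_1,e_2,a_1\rangle$. The inclusion $\F_2^{eq} \subseteq \F_3^{eq} \cap H^{eq}$ is immediate. For the reverse, by Theorem \ref{WeakElim} it suffices to show that every basic-sort imaginary lying in $\F_3^{eq} \cap H^{eq}$ is algebraic over $\F_2$.

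The central construction is the following. Identify $\langle e_4,e_5\rangle$ with $\pi_1(\Sigma_{1,1})$ so that the boundary subgroup is $\langle [e_4,e_5]\rangle$, and pick a pseudo-Anosov $h$ of $\Sigma_{1,1}$ fixing the boundary pointwise. The induced automorphism $h_*$ of $\langle e_4,e_5\rangle$ extends by identity on $\langle e_1,e_2,e_3\rangle$ to $\sigma \in \Aut(\F_5)$. Since $\sigma(e_3)=e_3$ and $\sigma([e_4,e_5])=[e_4,e_5]$, we have $\sigma(a_1)=a_1$, so $\sigma \in \Aut(\F_5/\F_3) \cap \Aut(\F_5/H)$. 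Consequently, for any $\mathfrak{e} \in \F_3^{eq} \cap H^{eq}$, the $\sigma$-orbit of $\mathfrak{e}$ is finite, and some $\sigma^N$ fixes $\mathfrak{e}$.

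I then proceed case by case in the two relevant decompositions $\F_5 = \F_3 * \langle e_4,e_5\rangle$ and $\F_5 = \F_2 * K$; under the latter, $\F_3 = \F_2 * \langle e_3\rangle$ and $H = \F_2 * \langle a_1\rangle$. For real elements, invoking the characterization $acl(A)=A$ for free factors $A$ of $\F$ (as in \cite{PerinSklinosForking}), it reduces to the combinatorial statement $\F_3 \cap H = \F_2$: by Theorem \ref{NormForm}, an element of $\F_3$ has $K$-syllables that are powers of $e_3$ while an element of $H$ has $K$-syllables that are powers of $a_1$, and $\langle e_3\rangle \cap \langle a_1\rangle = \{1\}$ inside $K$. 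For the conjugacy-class sort, the analogous characterization of $acl^{eq}(A)$ for a free factor (namely, $[g] \in acl^{eq}(A)$ iff $g$ is conjugate to an element of $A$, which itself is established via pseudo-Anosov orbit arguments) combined with Theorem \ref{NormConjForm} applied to the cyclically reduced forms in $\F_5 = \F_2 * K$ yields the same conclusion: $K$-syllables that are simultaneously $e_3$-powers and $a_1$-powers must be trivial, so $g$ is conjugate to an element of $\F_2$.

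For the $m$-left-coset and $(m,n)$-double-coset sorts, the argument genuinely uses Theorem \ref{Pseudo}(ii): the $\sigma^N$-invariance of the basic imaginary, combined with the infinite-orbit statement for double cosets of the boundary subgroup under pseudo-Anosov iterates, restricts the underlying tuple enough to force the imaginary into $\F_2^{eq}$. The main obstacle is the double-coset case $E_{4_{m,n}}$: one must simultaneously track the interaction of $\sigma^N$ with two centralizers $\langle a\rangle, \langle c\rangle$ and the middle element $b$, and reconcile the cyclically reduced forms across the two free product decompositions of $\F_5$. A careful case analysis on the positions where $b$ crosses the amalgamating structure, combined with repeated application of Theorem \ref{Pseudo}(ii), should resolve this delicate point.
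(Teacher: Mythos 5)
Your reduction to basic-sort imaginaries and your treatment of the real-element and conjugacy-class cases are close to the paper's strategy (the paper reduces via Theorem~\ref{Elim} rather than Theorem~\ref{WeakElim}, but both give the same footing). However, the pseudo-Anosov scaffolding you build around the automorphism $\sigma$ does nothing here: $\sigma$ fixes $\F_3$ and $H=\langle e_1,e_2,a_1\rangle$ pointwise, hence it fixes $\F_3^{eq}$ and $H^{eq}$ pointwise, so the observation that ``some $\sigma^N$ fixes $\mathfrak{e}$'' is automatic and yields no constraint whatsoever on a representative tuple (which may already be taken inside $\F_3$). The dynamics of Theorem~\ref{Pseudo}(ii) are useful precisely when one needs to show an element lying outside the boundary subgroup cannot have finite $h_*$-orbit; here the basic imaginary already has representatives in the $\sigma$-fixed subgroups $\F_3$ and $H$, so orbit-finiteness is true for free and carries no information. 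Your plan for the coset and double-coset cases therefore has a genuine gap: the promised ``restriction on the underlying tuple via $\sigma^N$-invariance'' never materializes, and the closing hand-wave is not just imprecision but an unclosed argument.

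The paper's proof of Proposition~\ref{D3} is in fact pseudo-Anosov-free and purely combinatorial. Writing $A=\langle e_1,e_2,e_3[e_4,e_5]\rangle$, the $m$-left-coset case goes: from $\beta=[(b_1,b_2)]_{E_{2_m}}\in\F_3^{eq}\cap A^{eq}$ one extracts $b_{21}\in\F_3$ and $b_{22}\in A$ with $C_{\F_5}(b_2)=C_{\F_5}(b_{21})=C_{\F_5}(b_{22})$; since $\F_3$ and $A$ are free factors of $\F_5$ they are root-closed, so the common root $b$ lies in $\F_3\cap A=\F_2$, giving $b_2\in\F_2$. Then from $b_{11}\in\F_3$, $b_{12}\in A$ with $b_1\,C(b_2)^m=b_{11}\,C(b_2)^m=b_{12}\,C(b_2)^m$ and $C(b_2)^m\leq\F_2$ one gets $b_{11}b_{12}^{-1}\in\F_2$, which forces $b_{11},b_{12}$ into $\F_3\cap A=\F_2$ and pins the coset down to $\F_2^{eq}$; the double-coset case is identical. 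Pseudo-Anosov automorphisms enter the picture only in Lemmas~\ref{BasicAlgebraicClosure2} and~\ref{BasicAlgebraicClosure4} (feeding Proposition~\ref{D4}), where one must control $acl^{eq}$ of a single subgroup rather than the intersection of two $eq$'s, and there the infinite-orbit argument is exactly what detects escape from that algebraic closure.
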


\begin{proof}
Let $A:=\langle e_1,e_2,e_3[e_4,e_5]\rangle$ and $\alpha \in \F_3^{eq}\cap A^{eq}$.  
Then there exists an equivalence relation $E$ and a tuple $\bar{a}$ consisting of elements of $\F_5$ such that $\alpha=[\bar{a}]_E$.  
Thus, by Theorem \ref{Elim}, we have that $R_E(\bar{a},\F_5^{eq})=\{\bar{\alpha}_1,\ldots,\bar{\alpha}_k\}$. Note 
that each $\bar{\alpha}_i$ is algebraic over $\F_2\alpha$, thus they all belong to $\F_3^{eq}\cap A^{eq}$. 
Now let $\beta$ be an element of the tuple $\bar{\alpha}_i$ for some $i\leq k$. We take cases for $\beta$. 
\begin{itemize}
 \item[(i)] Suppose $\beta\in \F_5$. Then $\beta\in \F_3\cap A$, which is exactly $\F_2$.
 \item[(ii)] Suppose $\beta=[b]_E$, with $E=E_1$. We may assume that $b$ is in a cyclically reduced form  
 with respect to the free splitting $\F_2*\langle e_3,e_4,e_5\rangle$. Since $\beta\in\F_3^{eq}$ (respectively $\beta\in A^{eq}$) 
 there is $b_1\in\F_3$ (respectively $b_2\in A$) such that $[b_1]_E=[b_2]_E=[b]_E$. But a cyclically reduced form for $b_1$ with respect 
 to $\F_2*\langle e_3\rangle$ (respectively for $b_2$ with respect to $\F_2*\langle e_3[e_4,e_5]\rangle$)  will automatically be 
 a cyclically reduced form with respect to $\F_2*\langle e_3,e_4,e_5\rangle$. Therefore, $b_1$ is a cyclic permutation of $b_2$ 
 which implies that $b\in \F_3\cap A$, as we wanted.
 \item[(iii)] Suppose $\beta=[(b_1,b_2)]_E$, with $E=E_{2_m}$. If $b_2$ is the identity element then the result holds trivially, thus 
 we may assume that $b_2\neq 1$. Since $\beta\in \F_3^{eq}$ (respectively $\beta\in A^{eq}$) 
 there is $b_{21}\in\F_3$ (respectively $b_{22}\in A$)  
 such that $C_{\F_5}(b_2)=C_{\F_5}(b_{21})=C_{\F_5}(b_{22})$. Therefore,  
 there are $b\in\F_5$ and $k,l,p\in\Z\setminus\{0\}$ such that $b^k=b_{21}$, $b^l=b_{22}$ and $b^p=b_2$. But, since 
 $\F_3$ (respectively $A$) is a free factor of $\F_5$, 
 if some power of an element of $\F_5$ belongs to $\F_3$ (respectively to $A$), then the element itself belongs to $\F_3$ 
 (respectively $A$), thus $b\in\F_3\cap A$ and consequently $b_2\in\F_2$.
 
 Moreover, there are $b_{11}\in \F_3$ and $b_{12}\in A$ such that $b_1.C(b_2)=b_{11}.C(b_2)=b_{12}.C(b_2)$, 
 thus $b_{11}b_{12}^{-1}\in \F_2$ and consequently all $b_1,b_{11},b_{12}$ belong to $\F_2$.
 \item[(iv)] Suppose $\beta=[(b_1,b_2,b_3)]_E$, with $E=E_{4_{m,n}}$. Then the proof that $b_1, b_2, b_3$ belong to $\F_2$ 
 is identical with the previous case.
\end{itemize}
This shows that $\alpha\in\F_2^{eq}$, as we wanted.
\end{proof}

In order to verify the fourth requirement of Definition \ref{Ample} we first need some preparatory lemmata. We formalize 
a construction that will often occur and point out some easy observations in the following remark:

\begin{rmk}\label{ExtPseudo}
\ 
\begin{itemize}
\item We first realize $\F_{2g}$ as the fundamental group, $\pi_1(\Sigma_{g,1})$, of the orientable surface of genus $g>0$ 
with connected boundary. Let $B$ be a fixed boundary subgroup of $\pi_1(\Sigma_{g,1})$. Suppose 
$A$ is a group and $f:B\rightarrow A$ is an injective morphism. 

We consider the amalgamated free product $G=\F_{2g}*_BA$ of $\F_{2g}$ with $A$ over $\{B,f\}$ and an automorphism $\alpha\in Aut_B(\F_{2g})$ coming 
from a pseudo-Anosov homeomorphism. Then $\alpha$ extends to an automorphism fixing (pointwise) $A$, and we will call such an automorphism an extension of a pseudo-Anosov homeomorphism. 

More generally, any representative (in the outer class) of an outer automorphism corresponding to a homeomorphism (fixing the boundary pointwise) can be extended to an automorphism of $G$, restricting to conjugation on $A$.

\item The subgroup $A:= \langle e_1,e_2,e_3,[e_4,e_5],\ldots,[e_{2i+4},e_{2i+5}]\rangle$ of $\F_{2i+5}$ is root closed, i.e. 
if, for some $a\in \F_{2i+5}$, $a^m\in A$, then $a\in A$. This is not hard to see either by using normal forms for free products or 
more elegantly by using Bass-Serre theory (for the basic notions of Bass-Serre
theory 
we refer the reader to \cite{SerreTrees}). Consider the 
action on a simplicial tree corresponding to the graph of groups decomposition of the left-side of Figure \ref{Fig1}. 


We observe that the edge stabilizers for the action are root closed in $\F_{2i+5}$: an edge stabilizer will be a conjugate of an edge group indicated on the left side of Figure \ref{Fig1}. We also note that an element of $\F_{2i+5}$ can either be elliptic or hyperbolic with respect to its action on the Bass-Serre tree. An element is elliptic if it fixes a point in the tree and hyperbolic if not, in the latter case the element admits an invariant axis (i.e. a subtree isometric to the real line) on which it acts by translations. If an element is hyperbolic, then any of its non-trivial powers is hyperbolic with the same axis.  

Now, since $a^m$ belongs to $A$ it fixes the vertex stabilized by $A$, call it $x$, and it is elliptic by definition. By our discussion above $a$ must also be elliptic. Assume, for a contradiction, that the element $a$ fixes a vertex different from $x$, call it $y$. Then $a^m$ fixes the segment between $x$ and $y$. Thus, $a^m$ fixes an edge adjacent to $x$. Since edge stabilizers are root closed, $a$ fixes the same edge, therefore it must fix $x$, a contradiction. 

\item The subgroup $\langle e_1,e_2,e_3,\ldots,[e_{2i+2},e_{2i+3}],
[e_{2i+4},e_{2i+5}][e_{2i+6},e_{2i+7}]\rangle$ of $\F_{2i+7}$ is root closed. The arguments of the 
previous point, considering the graph of groups decomposition on the left-side of Figure \ref{Fig2}, are also valid in this case.
\end{itemize}
\end{rmk}

\begin{figure}[ht!]
\centering
\includegraphics[width=.8\textwidth]{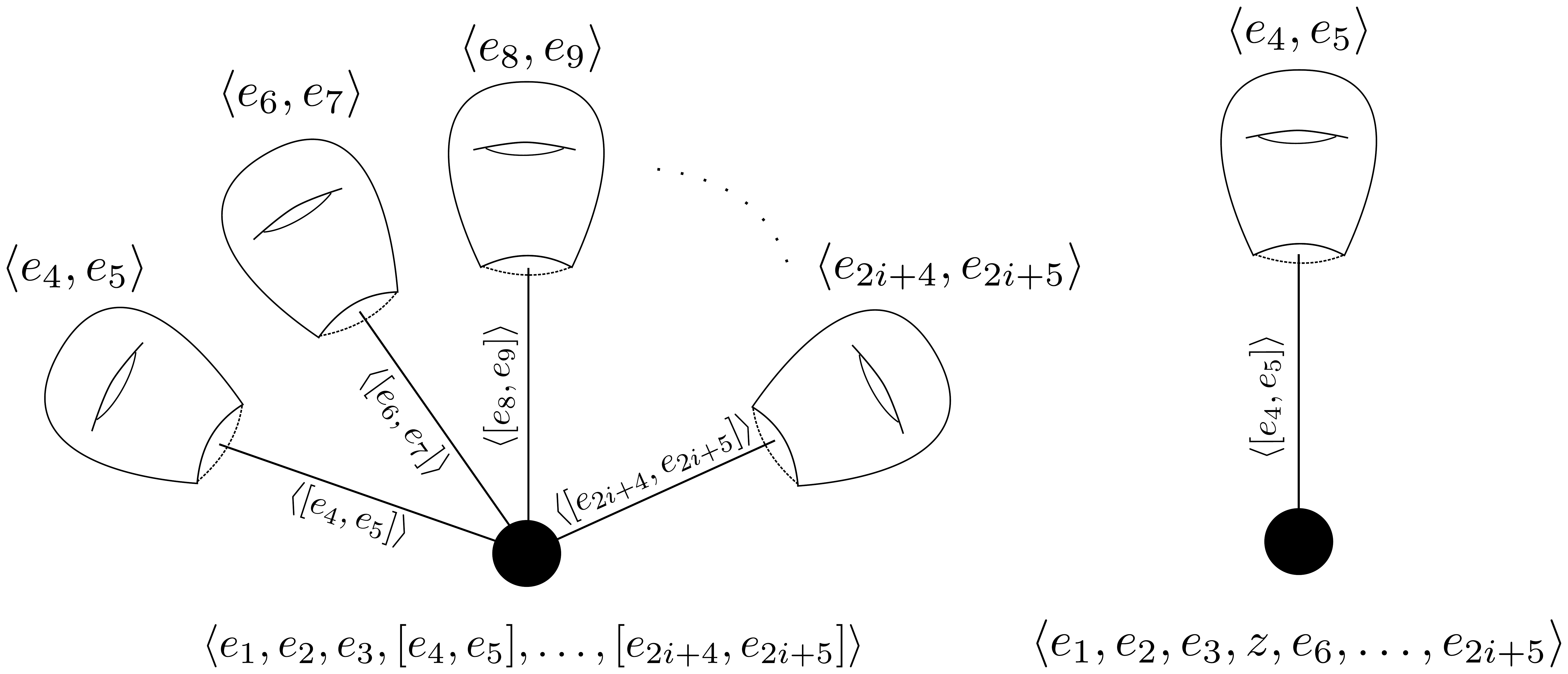}
\caption{A series of amalgamated free products (left-side). An amalgamated free product (right-side)}\label{Fig1}
\end{figure}

\begin{lemma}\label{BasicAlgebraicClosure2}
Suppose $\gamma\in acl^{eq}(e_1,e_2,a_0,a_1,\ldots,a_{i+1})\cap \F_{2i+5}^{we}$, for some $i<n$. Then:
\begin{itemize}
 \item if $\gamma$ is a real element, then $\gamma\in \langle e_1,e_2,$ $a_0,a_1,\ldots,a_{i+1}\rangle$;
 \item if $\gamma=[\bar{c}]_E$ for some basic equivalence relation $E$, then there is 
 $\bar{d}\in\langle e_1,e_2,a_0,a_1,$ $\ldots,a_{i+1}\rangle$ with  $\bar{c}\sim_E\bar{d}$.
\end{itemize} 
\end{lemma}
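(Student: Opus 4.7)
The plan is to exhibit, for each $0 \le j \le i$, an amalgamated free product decomposition
\[
\F_{2i+5} = A_j *_{B_j} S_j, \quad S_j := \langle e_{2j+4}, e_{2j+5} \rangle \cong \pi_1(\Sigma_{1,1}), \quad B_j := \langle [e_{2j+4}, e_{2j+5}]\rangle,
\]
with $A_j := \langle e_1, \ldots, e_{2j+3}, [e_{2j+4}, e_{2j+5}], e_{2j+6}, \ldots, e_{2i+5} \rangle$. A rank count confirms this is a valid amalgamation, and a direct check gives $A = \bigcap_{j=0}^{i} A_j$, where $A := \langle e_1, e_2, a_0, \ldots, a_{i+1}\rangle$. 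For each $j$ I would pick a pseudo-Anosov of $\Sigma_{1,1}$ whose outer class admits a representative $\phi_j \in \Aut(S_j)$ fixing the generator $[e_{2j+4}, e_{2j+5}]$ of $B_j$, and by Remark \ref{ExtPseudo} extend it to $\Phi_j \in \Aut(\F_{2i+5})$ acting as the identity on $A_j$, and in particular on $A$.

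Since $\Phi_j$ fixes $e_1, e_2, a_0, \ldots, a_{i+1}$ pointwise, the induced automorphism of $\F_{2i+5}^{we}$ preserves $acl^{eq}(e_1, e_2, a_0, \ldots, a_{i+1})$ setwise, so for every $\gamma$ in this closure the orbit $\{\Phi_j^k(\gamma) : k \in \omega\}$ is finite for each $j$. The rest of the argument consists of translating this finiteness into the conclusion of the lemma by a case analysis on the sort of $\gamma$: the common theme is that an obstruction to ``$\gamma$ or its $E$-class being realised inside $A$'' forces an infinite $\Phi_j$-orbit via Theorem \ref{Pseudo}, contradicting algebraicity.

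Assume first that $\gamma$ is real and, for contradiction, $\gamma \notin A$; pick $j$ with $\gamma \notin A_j$. The normal form of $\gamma$ in $A_j *_{B_j} S_j$ (Theorem \ref{NormForm}) then contains at least one syllable in $S_j \setminus B_j$, and $\Phi_j^k(\gamma) = \gamma$ holding for infinitely many $k$ unwinds into a finiteness statement about $\Phi_j$-orbits of single $B_j$-cosets, contradicting Theorem \ref{Pseudo}. For the conjugacy case $\gamma = [c]_{E_1}$, the Conjugacy Theorem \ref{NormConjForm} combined with Theorem \ref{Pseudo}(i) forces every cyclically reduced $S_j$-syllable of $c$ into $B_j$ for every $j$, placing a conjugate of $c$ inside $A$. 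For $\gamma = [(b_1, b_2)]_{E_{2_m}}$ or $[(b_1, b_2, b_3)]_{E_{4_{m,n}}}$, the root-closure of $A$ in $\F_{2i+5}$ recorded in Remark \ref{ExtPseudo} places the centralizer-generating components of $\bar{c}$ inside $A$, and the remaining component is constrained by a coset, respectively double-coset, finiteness statement that by Theorem \ref{Pseudo}(ii) forces an $A$-representative of the $E$-class.

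The main obstacle is the bookkeeping in the double-coset case $E_{4_{m,n}}$: one has to translate $\Phi_j^k(\bar{c}) \sim_E \bar{c}$, valid for infinitely many $k$, into an honest identity of double cosets inside $\F_{2i+5}$, and then, using the graph of groups structure together with the fact that $\Phi_j$ fixes all factors other than $S_j$ pointwise, reduce this further to an identity of $B_j$-double cosets inside $S_j$. Once this reduction is in place, Theorem \ref{Pseudo}(ii) is exactly the tool that rules out all configurations except the one that places an $E$-representative of $\bar{c}$ inside $A$.
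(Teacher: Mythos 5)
Your proposal is correct and follows essentially the same strategy as the paper: realize $\F_{2i+5}$ as an amalgamated free product over a boundary subgroup of a one-holed torus, extend a pseudo-Anosov of that torus to an automorphism of $\F_{2i+5}$ fixing $A$ pointwise via Remark \ref{ExtPseudo}, and then use finiteness of the orbit of $\gamma$ (forced by algebraicity over $A$) together with Theorems \ref{NormForm}, \ref{NormConjForm} and \ref{Pseudo} to pin $\gamma$ (or an $E$-representative of $\bar c$) inside $A$. The one genuine, and welcome, organizational difference is how you locate the ``bad'' one-holed-torus factor: the paper unwinds a nested chain $A=A_0\subset A_1\subset\cdots\subset A_{i+1}=\F_{2i+5}$ of iterated amalgams and reduces by an ``easy induction'' and a WLOG to the outermost level, whereas you present the $i+1$ splittings $\F_{2i+5}=A_j*_{B_j}S_j$ in parallel and record the identity $A=\bigcap_j A_j$, so that $\gamma\notin A$ immediately supplies an index $j$ to work with. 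This makes the reduction step cleaner than the paper's, while the subsequent syllable/coset bookkeeping in each of the four cases is the same; your sketch elides the detailed normal-form manipulations (e.g.\ isolating the last $S_j$-syllable and arguing that its $B_j$-coset, or in the coset cases its $B_j$-double-coset, would have to be $\Phi_j$-periodic), but these are exactly the computations the paper carries out.
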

\begin{proof}
First we note that $\langle e_1,e_2,a_0,a_1,\ldots,a_{i+1}\rangle=\langle e_1,e_2,e_3,[e_4,e_5],\ldots,[e_{2i+4},e_{2i+5}]\rangle$ and we denote 
this group by $A$. 
We take cases for $\gamma$.
\begin{itemize}
 \item[(i)] Let $\gamma$ be an element of $\F_{2i+5}$. Suppose, for the sake of contradiction, 
 that $\gamma\not\in A$. We consider $\F_{2i+5}$ as the following finite sequence 
 of amalgamated free products (see figure \ref{Fig1}): 
 $$(((A*_{[e_{2i+4},e_{2i+5}]}\langle e_{2i+4},e_{2i+5}\rangle)*_{[e_{2i+2},e_{2i+3}]}\langle e_{2i+2},e_{2i+3}\rangle)\ldots)*_{[e_4,e_5]}\langle e_4,e_5\rangle$$
We denote by $A_1$ the amalgamated free product $A*_{[e_{2i+4},e_{2i+5}]}\langle e_{2i+4},e_{2i+5}\rangle$, and 
by $A_{l+1}$ the amalgamated free product $A_l*_{[e_{2i+4-2l},e_{2i+5-2l}]}\langle e_{2i+4-2l},e_{2i+5-2l}\rangle$ for $1\leq l\leq i$.\\
An easy induction shows that there is $l\leq i$, so that $\gamma$ admits a normal form, $\delta\gamma_1\gamma_2\ldots \gamma_m$, 
with respect to the amalgamated free product $A_l$, for which there is $\gamma_j$ for some $j\leq m$ that does not belong to $A_{l-1}$. 
Thus, without loss of generality we may assume that this is true for $l=i$ and 
$\gamma_j$ is an element of $A_i*_{\langle [e_4,e_5]\rangle}\langle e_4,e_5\rangle$ 
that does not belong to $A_i=\langle e_1,e_2,e_3,[e_4,e_5],e_6,\ldots,e_{2i+5}\rangle$.

 We will obtain a contradiction by showing that $\gamma$ has infinite orbit under $Aut_A(\F_{2i+5})$.

 We realize the group $\langle e_4,e_5\rangle$ 
 as the fundamental group of the torus with connected boundary and we fix $\langle [e_4,e_5]\rangle$ to be the preferred boundary subgroup. 
 Let $h_*\in Aut_A(\F_{2i+5})$ be an extension of a pseudo-Anosov homeomorphism. 
 By Theorem \ref{Pseudo}(ii), we have that for any 
 $a\in\langle e_4,e_5\rangle\setminus\langle [e_4,e_5]\rangle$, the left coset of the form    
 $a\cdot\langle [e_4,e_5]\rangle$ has an infinite orbit under powers of $h_*$. Since $\gamma$ is in $acl^{eq}(A)$ and 
 $h_*$ fixes $A$ we have that $\{h_*^k(\gamma) \ | \ k\in\omega\}$ is finite. Thus, for arbitrarily large $k$ 
 we get $h_*^k(\gamma)=\gamma$. Now, first assume that $\gamma_m\in\langle e_4,e_5\rangle$,  
 then $\delta\cdot h_*^k(\gamma_1)\ldots h_*^k(\gamma_m)\neq \delta\cdot\gamma_1\ldots\gamma_m$, since by our previous remark $h_*^k(\gamma_m)$ 
 would represent a different right coset from $\gamma_m$, contradicting Theorem \ref{NormForm}. In the case $\gamma_m\not\in\langle e_4,e_5\rangle$ 
 the same argument is valid for $\gamma_{m-1}$ which is necessarily in $\langle e_4,e_5\rangle$.
 
 \item[(ii)] Let $\gamma=[c]_E$, for $E=E_1$. Suppose, for the sake of contradiction, that $c$ cannot be 
 conjugated to an element in $A$. As in case (i), but using cyclically reduced forms now, we may assume that 
 $c$ admits a cyclically reduced form, $\gamma_1\gamma_2\ldots \gamma_m$, with respect to the amalgamated free product 
 $A_i*_{[e_4,e_5]}\langle e_4,e_5\rangle$ 
 so that for some $j\leq m$ we have that $\gamma_j$ does not belong to $A_i=\langle e_1,e_2,e_3,[e_4,e_5],e_6,\ldots,e_{2i+5}\rangle$.
 
 We will obtain a contradiction by showing that the conjugacy class of $c$ has infinite orbit under $Aut_A(\F_{2i+5})$. We 
 consider $h_*\in Aut_A(\F_{2i+5})$ to be an extension of a pseudo-Anosov homeomorphism obtained exactly as in Case (i). 
 As before, since $\{[h_*^k(c)] \ | \ k\in\omega\}$ is finite, $c$ is a conjugate of $h_*^k(c)$ for arbitrarily large $k$. 
 We now take cases for the length of the cyclically reduced form, 
 $\gamma_1\ldots \gamma_m$, for the element $c$. Note that we cannot have $c\in\langle[e_4,e_5]\rangle$.
    \begin{itemize}
    \item Suppose $m=1$ (thus $\gamma_1\in \langle e_4,e_5\rangle\setminus\langle[e_4,e_5]\rangle$), 
    and let $I$ be the infinite subset of $\omega$ for which for any $k\in I$, $h_*^k(\gamma_1)$ is a conjugate of $\gamma_1$. 
    Then, by Theorem \ref{NormConjForm}(i), $h_*^k(\gamma_1)$ and $\gamma_1$ are conjugates in $\langle e_4,e_5\rangle$, 
    but, by Theorem \ref{Pseudo}(i), $\{[h_*^k(\gamma_1)]_E \ | \ k\in I\}$ is infinite, a contradiction;
    \item Suppose $m>1$, and let 
    $h_*^k(\gamma_1\ldots \gamma_m)$ be a conjugate of $\gamma_1\ldots \gamma_m$ for arbitrarily large $k$. By Theorem \ref{NormConjForm}(iii) 
    we have that $h_*^k(\gamma_1)\ldots h_*^k(\gamma_m)$ is obtained from $\gamma_1\ldots \gamma_m$ by a cyclic permutation after possibly 
    conjugating by an element of the boundary subgroup. Thus, $b(k)^{-1}\gamma_{i_k(1)}\ldots \gamma_{i_{k}(m)}b(k)$ is $h_*^k(\gamma_1)\ldots h_*^k(\gamma_m)$ 
    for some cyclic permutation $i_k\in\langle(1 2 \ldots m)\rangle$ and some 
    $b(k)$ in $\langle[e_4,e_5]\rangle$,  
    for arbitrarily large $k$. Clearly this contradicts Theorem \ref{Pseudo}(ii).   
    \end{itemize} 
 
 \item[(iii)] Let $\gamma=[(c_1,c_2)]_{E_{2_p}}$. Recall that $\gamma$ is determined by the left coset $c_1\cdot C^p_{\F_{2i+5}}(c_2)$. 
 Suppose, for the sake of contradiction, that $c_2\not\in A$. Then as in 
 Case (i) we have that $c_2$ has infinite orbit under $Aut_A(\F_{2i+5})$. Let $(f_k)_{k\in\omega}\in Aut_A(\F_{2i+5})$ 
 be such that $f_k(c_2)\neq f_l(c_2)$ for $k\neq l$. Since $\gamma\in acl^{eq}(A)$ we have 
 that $(f_{k_1}(c_1),f_{k_1}(c_2))\sim_{E_{2_p}}(f_l(c_1),f_l(c_2))$ for some 
 $k_1\in\omega$ and arbitrarily large $l$. But then $C(f_{k_1}(c_2))=C(f_l(c_2))$ for arbitrarily large $l$, a contradiction 
 since for any automorphism $f\in Aut(\F_{2i+5})$ and any non-trivial element $c\in \F_{2i+5}$, if $C(c)=C(f(c))$
 then either $f(c)=c$ or $f(c)=c^{-1}$. Thus, $c_2\in A$. In the case that $c_2$ is the identity element we trivially have that 
 $c_1$ can be chosen to be in $A$, thus we may assume that $c_2\in A\setminus\{1\}$.
 
 Now assume, for the sake of contradiction, that $c_1\not\in A$ and as in Case (i) we may assume that $c_1$ admits a normal 
 form, $\delta_1\gamma_{11}\ldots\gamma_{1m}$, with respect to the amalgamated free product $A_i*_{[e_4,e_5]}\langle e_4,e_5\rangle$ 
 so that for some $j\leq m$ we have that $\gamma_{1j}$ does not belong to $A_i=\langle e_1,e_2,e_3,[e_4,e_5],e_6,\ldots,$ $e_{2i+5}\rangle$.
 We take $h_*$ to be an extension of a pseudo-Anosov homeomorphism exactly as in Case (i). 
 We will show that $\gamma$ has an infinite orbit under powers of $h_*$. 
 Suppose not, then the set of left cosets $\{h_*^k(c_1)\cdot C_{\F_{2i+5}}(c_2) \ | \ k\in\omega\}$ is finite. Thus, 
 for arbitrarily large $k$, $h_*^k(c_1)=c_1\cdot d^{n_k}$ for some $d$ in $A$
(recall that $c_2$ must be in $A$ and $A$ is root closed). 
 Consider the normal form, $\delta_1\beta_{11}\alpha_{11}\ldots\beta_{1m}\alpha_{1m}\beta_{1(m+1)}$, for $c_1$ where $\alpha_{1j}$ 
 belongs to $A_i$ (note that $\beta_{11}$ or $\beta_{1(m+1)}$ might be trivial). We take further cases: 
 \begin{itemize}
 
 \item Suppose $c_1$ is an element in $\langle e_4,e_5 \rangle \setminus \langle [e_4,e_5]\rangle$. First note that if $d$ 
 is not in the boundary subgroup $\langle [e_4,e_5]\rangle$, then $c_1d^{n_k}$ has different normal form from $h_*^k(c_1)$, thus we may assume that 
 $d$ is in the boundary subgroup. Now this easily contradicts Theorem \ref{Pseudo}(ii), since it implies that $\{B.h_*^k(c_1).B \ | \ k\in I\}$, for 
 some infinite subset $I$ of $\omega$, is finite.
 
 \item Suppose the last element in the normal form for $c_1$ is not in $A_i$ (i.e. $\beta_{1(m+1)}$ is not trivial). Then $d$ 
 must belong to the boundary subgroup $\langle[e_4,e_5]\rangle$, otherwise $\delta_1\beta_{11}\alpha_{11}\ldots\beta_{1m}\alpha_{1m}\beta_{1(m+1)}d^{n_k}$ 
 and $\delta_1h_*^k(\beta_{11})\alpha_{11}\ldots h_*^k(\beta_{1m})\alpha_{1m}$ $h_*^k(\beta_{1(m+1)})$ have different normal forms. Thus, 
 for $k$ arbitrarily large $\beta_{1(m+1)} d^{n_k}$ represent the same right coset with respect to the boundary subgroup as $h_*^k(\beta_{1(m+1)})$, 
 this contradicts Theorem \ref{Pseudo}(ii).
 
 \item Suppose the last element in the normal form for $c_1$ is in $A_i$ (i.e. $\beta_{1(m+1)}$ is trivial). Then 
 since $\delta_1\beta_{11}\alpha_{11}\ldots\beta_{1m}\alpha_{1m}d^{n_k}$ has the same normal form as 
 $\delta_1h_*^k(\beta_{11})\alpha_{11}\ldots h_*^k(\beta_{1m})\alpha_{1m}$ we have that $\alpha_{1m}d^{n_k}=b^{p_k}\alpha_{1m}$ 
 for some $b$ in the boundary subgroup $\langle [e_4,e_5]\rangle$. But then again for arbitrarily large $k$, $\beta_{1m}b^{p_k}$ and $h_*^k(\beta_{1m})$ 
 should represent the same right coset with respect to the boundary subgroup, a contradiction.
 
 \end{itemize}

 \item[(iv)] Let $\gamma=[(c_1,c_2,c_3)]_{E_{4_{p,q}}}$. Recall that $\gamma$ is determined by the double coset 
 $C^p_{\F_{2i+5}}(c_1)\cdot c_2\cdot C^q_{\F_{2i+5}}(c_3)$. We can easily see, by the proof of Case (iii), that 
 $c_1,c_3\in A\setminus\{1\}$. Suppose, for the sake of contradiction, that $c_2\not\in A$. If we repeat the arguments of Case (iii), 
 we get a normal form, $\delta_2\gamma_{21}\ldots\gamma_{2m}$, for $c_2$ (with respect to the above 
 mentioned amalgamated free product) and we reach the conclusion that for arbitrarily large $k$, 
 $h_*^k(c_2)= b_1^{q_k}\cdot c_2\cdot b_2^{p_k}$ for some elements $b_1,b_2\in A$. Consider the normal form, 
 $\delta_2\beta_{21}\alpha_{21}\ldots\beta_{2m}\alpha_{2m}\beta_{2(m+1)}$, for $c_2$ where $\alpha_{2j}$ 
 belongs to $A_i$. The situation is completely identical with Case (iii), we still give the arguments for completeness. We take further cases:
 
 \begin{itemize}
 \item Suppose $c_2$ is an element in $\langle e_4,e_5 \rangle \setminus \langle [e_4,e_5]\rangle$. 
 First note that $b_1, b_2$ must be in the boundary subgroup $\langle [e_4,e_5]\rangle$, otherwise 
 $b_1^{q_k}\cdot c_2\cdot b_2^{p_k}$ has different normal form from $h_*^k(c_2)$. This easily contradicts 
 Theorem \ref{Pseudo}(ii), since it implies that $\{B.h_*^k(c_2).B \ | \ k\in I\}$, for some infinite subset $I$ of $\omega$,  
 is finite. 
 
 \item Suppose the last element in the normal form for $c_2$ is not in $A_i$ (i.e. $\beta_{1(m+1)}$ is not trivial). 
 Then $b_2$ must belong to the boundary subgroup $\langle [e_4,e_5]\rangle$, otherwise $b_1^{p_k}c_2b_2^{q_k}$ has different normal form 
 from $h_*^k(c_2)$. Now as before we have that $\beta_{1(m+1)}b_2^{q_k}$ 
 represent the same right coset with respect to the boundary subgroup as $h_*^k(\beta_{1(m+1)})$, a contradiction.
 
 \item Suppose the last element in the normal form for $c_2$ is in $A_i$ (i.e. $\beta_{1(m+1)}$ is trivial). 
 Since $b_1^{p_k}\delta_2\beta_{21}\alpha_{21}\ldots\beta_{2m}\alpha_{2m}b_2^{q_k}$ has the same normal form as 
 $\delta_2h_*^k(\beta_{21})\alpha_{21}$ $\ldots h_*^k(\beta_{2m})\alpha_{2m}$ we have that $\alpha_{2m}b_2^{q_k}=b^{l_k}\alpha_{2m}$ 
 for some element $b$ in the boundary subgroup $\langle [e_4,e_5]\rangle$. Now if $m>1$, the proof is identical to the analogous part 
 in the proof of Case (iii). If $m=1$,  
 then $c_2$ has the form $\delta_2\beta\alpha$. Thus, $b_1^{p_k}\delta_2\beta b^{l_k}\alpha$ has the same normal form 
 as $\delta_2h_*^k(\beta)\alpha$, so $b_1^{p_k}$ must be in the boundary subgroup. But then again since $b_1^{p_k}\delta_2$ 
 cannot change the coset we have that $\beta b^{l_k}$, 
 $h_*^k(\beta)$ represent the same right coset with respect to the boundary subgroup, contradicting Theorem \ref{Pseudo}(ii).
 \end{itemize}
\end{itemize}

\end{proof}

\begin{lemma}\label{BasicAlgebraicClosure4}
Suppose $\gamma\in acl^{eq}(e_1,e_2,a_0,a_1,\ldots,a_i,a_{i+2})\cap \F_{2i+7}^{we}$, for some $i<n$. Then: 
\begin{itemize}
 \item if $\gamma$ is a real element, then $\gamma\in \langle e_1,e_2,$ $a_0,a_1,\ldots,a_{i+2}\rangle$;
 \item if $\gamma=[\bar{c}]_E$ for some basic equivalence relation $E$, then there is 
 $\bar{d}\in\langle e_1,e_2,a_0,a_1,$ $\ldots,a_{i+2}\rangle$ with  $\bar{c}\sim_E\bar{d}$.
\end{itemize}  
\end{lemma}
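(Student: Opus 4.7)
The plan is to prove this lemma by an argument essentially parallel to Lemma \ref{BasicAlgebraicClosure2}, replacing the once-punctured torus with a once-punctured genus-$2$ surface so that the same pseudo-Anosov machinery applies.

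First I would observe that $A := \langle e_1,e_2,a_0,a_1,\ldots,a_i,a_{i+2}\rangle$ coincides with
$\langle e_1,e_2,e_3,[e_4,e_5],\ldots,[e_{2i+2},e_{2i+3}],[e_{2i+4},e_{2i+5}][e_{2i+6},e_{2i+7}]\rangle$, since $a_{i+2}=a_i[e_{2i+4},e_{2i+5}][e_{2i+6},e_{2i+7}]$. By Remark \ref{ExtPseudo}, this subgroup is root-closed in $\F_{2i+7}$. The key change from Lemma \ref{BasicAlgebraicClosure2} is that I would express $\F_{2i+7}$ as the iterated amalgamated free product (cf.\ the left-hand side of Figure \ref{Fig2})
\[
\bigl(\ldots\bigl((A *_{\langle[e_{2i+4},e_{2i+5}][e_{2i+6},e_{2i+7}]\rangle}\langle e_{2i+4},\ldots,e_{2i+7}\rangle)*_{\langle[e_{2i+2},e_{2i+3}]\rangle}\langle e_{2i+2},e_{2i+3}\rangle\bigr)\ldots\bigr)*_{\langle[e_4,e_5]\rangle}\langle e_4,e_5\rangle.
\]
The critical new vertex group $\langle e_{2i+4},e_{2i+5},e_{2i+6},e_{2i+7}\rangle\cong\F_4$ is now realized as $\pi_1(\Sigma_{2,1})$ with distinguished boundary subgroup $\langle[e_{2i+4},e_{2i+5}][e_{2i+6},e_{2i+7}]\rangle$, and since $\Sigma_{2,1}$ has Euler characteristic $-3$, by the fact of Penner cited above it admits pseudo-Anosov homeomorphisms. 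Any such homeomorphism yields an extension $h_*\in\Aut_A(\F_{2i+7})$ fixing $A$ pointwise, to which parts (i) and (ii) of Theorem \ref{Pseudo} apply.

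From this point the proof splits into the same four cases as Lemma \ref{BasicAlgebraicClosure2}, according to whether $\gamma$ is a real element, a conjugacy class $[c]_{E_1}$, a left coset $[(c_1,c_2)]_{E_{2_p}}$, or a double coset $[(c_1,c_2,c_3)]_{E_{4_{p,q}}}$. In each case I would argue by contradiction: assuming the relevant element is not in $A$ (or cannot be conjugated into $A$), I would write a normal (respectively cyclically reduced) form for it with respect to the outermost amalgamation and, by descending through the sequence of amalgamations, reduce to the situation where some syllable sits in $\langle e_{2i+4},\ldots,e_{2i+7}\rangle\setminus\langle[e_{2i+4},e_{2i+5}][e_{2i+6},e_{2i+7}]\rangle$. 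Using Theorems \ref{NormForm} and \ref{NormConjForm} to control normal and cyclically reduced forms under $h_*^k$, and Theorem \ref{Pseudo} to force the orbits of conjugacy classes, left cosets, or double cosets of the offending syllable to be infinite, I would contradict the finiteness of $\{h_*^k(\gamma)\mid k\in\omega\}$ guaranteed by $\gamma\in acl^{eq}(A)$.

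The arguments are really identical line by line to the four cases of Lemma \ref{BasicAlgebraicClosure2}; the only genuine obstacle is the initial geometric setup, namely checking that the outermost vertex group $\langle e_{2i+4},\ldots,e_{2i+7}\rangle$ really does carry a pseudo-Anosov homeomorphism fixing its boundary subgroup $\langle[e_{2i+4},e_{2i+5}][e_{2i+6},e_{2i+7}]\rangle$ pointwise, so that the extension to $\F_{2i+7}$ acts as the identity on $A$. Once this is in place, the case analysis is routine, and in particular I would not re-prove the combinatorial lemmas on how $h_*^k$ interacts with the normal-form calculation — I would simply invoke the corresponding paragraphs of the proof of Lemma \ref{BasicAlgebraicClosure2} verbatim, with $\langle e_4,e_5\rangle$ replaced by $\langle e_{2i+4},\ldots,e_{2i+7}\rangle$ and the boundary subgroup $\langle[e_4,e_5]\rangle$ replaced by $\langle[e_{2i+4},e_{2i+5}][e_{2i+6},e_{2i+7}]\rangle$.
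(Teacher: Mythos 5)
Your proposal follows the same route as the paper's (which, for this lemma, is itself only a sketch pointing back to Lemma \ref{BasicAlgebraicClosure2}): identify the subgroup $A=\langle e_1,e_2,e_3,[e_4,e_5],\ldots,[e_{2i+2},e_{2i+3}],[e_{2i+4},e_{2i+5}][e_{2i+6},e_{2i+7}]\rangle$, observe it is root-closed via Remark \ref{ExtPseudo}, realize $\langle e_{2i+4},\ldots,e_{2i+7}\rangle$ as $\pi_1(\Sigma_{2,1})$, invoke the Euler characteristic $-3$ and Penner's theorem for the existence of a pseudo-Anosov, extend it to $\Aut_A(\F_{2i+7})$, and then rerun the four-case analysis. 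All of that matches the paper.

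There is, however, one imprecision in your reduction step that is worth flagging, since it is precisely what the paper's hint (\og use homeomorphisms of $\Sigma_{1,1}$ \emph{as well as} homeomorphisms of $\Sigma_{2,1}$\fg) is warning about. You write that by descending through the chain of amalgamations one reduces to the situation where some syllable lies in $\langle e_{2i+4},\ldots,e_{2i+7}\rangle\setminus\langle[e_{2i+4},e_{2i+5}][e_{2i+6},e_{2i+7}]\rangle$. That is not true in general: the offending syllable of $\gamma$ may sit at one of the intermediate torus levels $\langle e_{2l+2},e_{2l+3}\rangle$, not at the genus-$2$ level. (Concretely, $\gamma=e_4\in\F_{2i+7}\setminus A$ has no syllable meeting $\langle e_{2i+4},\ldots,e_{2i+7}\rangle$ at all; a pseudo-Anosov extension supported on the genus-$2$ piece fixes $e_4$, so it cannot move $\gamma$.) The correct statement is that the reduction lands at \emph{some} level of the chain, which may be a torus amalgamation $B_{l}*_{\langle[e_{2l+2},e_{2l+3}]\rangle}\langle e_{2l+2},e_{2l+3}\rangle$ or the genus-$2$ amalgamation; accordingly one must extend a pseudo-Anosov of $\Sigma_{1,1}$ supported on the relevant torus factor (fixing its boundary commutator, hence fixing $A$) in the former case, and a pseudo-Anosov of $\Sigma_{2,1}$ in the latter. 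Once you allow both kinds of pseudo-Anosov extensions, the rest of your outline is correct and the line-by-line transcription of Lemma \ref{BasicAlgebraicClosure2} goes through exactly as you describe.
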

\begin{proof}
In this case we have that 
$$\langle e_1,e_2,a_0,a_1,\ldots,a_i,a_{i+2}\rangle=\langle e_1,e_2,e_3,\ldots,[e_{2i+2},e_{2i+3}],
[e_{2i+4},e_{2i+5}][e_{2i+6},e_{2i+7}]\rangle$$

The proof is identical to the proof of Lemma \ref{BasicAlgebraicClosure2} and is left to the reader. A hint is to use 
homeomorphisms of $\Sigma_{1,1}$ as well as homeomorphisms of $\Sigma_{2,1}$ (see figure \ref{Fig2}). Note that 
the Euler characteristic of $\Sigma_{2,1}$ is $-3$, thus it carries a pseudo-Anosov.
\end{proof}

\begin{figure}[ht!]
\centering
\includegraphics[width=.9\textwidth]{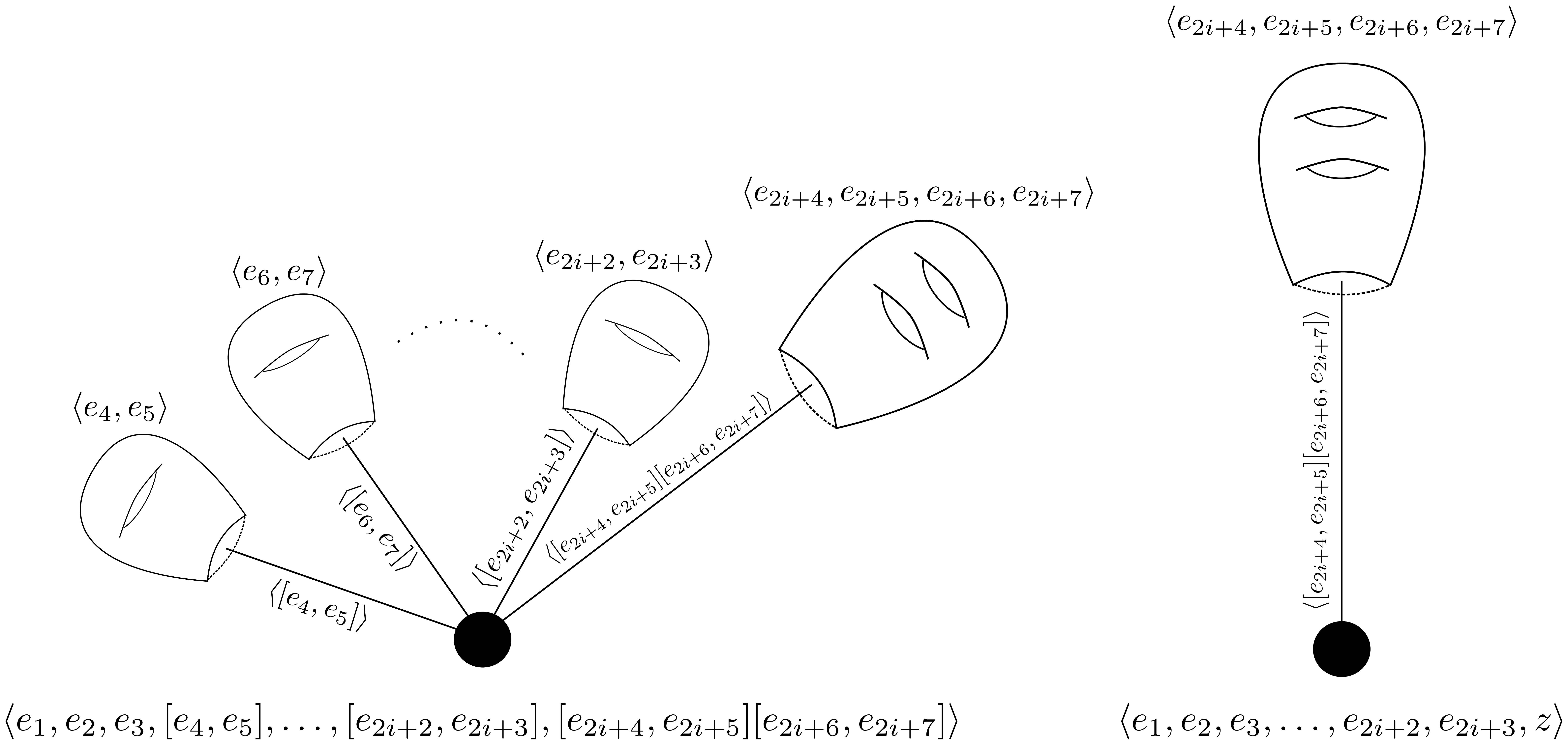}
\caption{A series of amalgamated free products (left-side). An amalgamated free product (right-side)}\label{Fig2}
\end{figure}

We are now ready to verify the fourth requirement of Definition \ref{Ample}.

\begin{prop} \label{D4}
Let $0\leq i<n$. Then $acl^{eq}(e_1,e_2,a_0,\ldots,a_i,a_{i+1})\cap acl^{eq}(e_1,e_2,a_0,$ $\ldots,a_i,a_{i+2})=acl^{eq}(e_1,e_2,a_0,\ldots,a_i)$.
\end{prop}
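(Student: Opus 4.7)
The inclusion $\supseteq$ is immediate. For the reverse, set $P := \langle e_1,e_2,a_0,\ldots,a_i\rangle$, $Q_1 := \langle e_1,e_2,a_0,\ldots,a_{i+1}\rangle = P*\langle c_1\rangle$, and $Q_2 := \langle e_1,e_2,a_0,\ldots,a_i,a_{i+2}\rangle = P*\langle c_1c_2\rangle$, where $c_1:=[e_{2i+4},e_{2i+5}]$ and $c_2:=[e_{2i+6},e_{2i+7}]$. The plan is to apply weak elimination of imaginaries (Theorem \ref{WeakElim}) to any $\alpha$ in the intersection to obtain a tuple $\bar b\in\F^{we}$ with $\alpha\in dcl^{eq}(\bar b)$ and $\bar b\in acl^{eq}(\alpha)$; it then suffices to show that each component $\beta$ of $\bar b$ admits a representative with coordinates in $P$, which will give $\alpha\in dcl^{eq}(P)\subseteq acl^{eq}(P)$.

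The key algebraic input is the subgroup identity $Q_1\cap Q_2=P$ inside $\F_{2i+7}$. To prove this, set $H:=\langle Q_1,Q_2\rangle=\langle P,c_1,c_2\rangle$. Starting from the free product decomposition $\F_{2i+7}=\langle e_1,e_2,e_3\rangle*\langle e_4,e_5\rangle*\cdots*\langle e_{2i+6},e_{2i+7}\rangle$ and the classical fact that subgroups chosen from distinct factors of a free product generate their free product, one obtains $H = P*\langle c_1\rangle*\langle c_2\rangle$. Since elements of $Q_1$ have $H$-normal forms involving no $c_2$, while elements of $Q_2\setminus P$ have $H$-normal forms in which $c_2$ appears, uniqueness of normal form forces $Q_1\cap Q_2=P$.

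Lemmas \ref{BasicAlgebraicClosure2} and \ref{BasicAlgebraicClosure4} give representatives of $\beta$ in $Q_1$ and in $Q_2$ respectively, and the proof now proceeds by case analysis on the sort of $\beta$. The real case is immediate from $Q_1\cap Q_2=P$. For the coset sorts $E_{2_m}$ and $E_{4_{m,n}}$, combine the centralizer condition with the root-closedness of $Q_1$ and $Q_2$ (Remark \ref{ExtPseudo}): if the relevant coordinate is non-trivial with centralizer $\langle b\rangle$, then $b$ is a root of an element of both $Q_1$ and $Q_2$, so root-closedness places $b\in Q_1\cap Q_2=P$; the coset relation then forces the remaining coordinates into $Q_1\cap Q_2=P$ as well, producing a representative in $P$.

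The main obstacle is the conjugacy sort $\beta=[c]_{E_1}$: one has $d_1\in Q_1$ and $d_2\in Q_2$ both conjugate to $c$ in $\F_{2i+7}$, and must produce $d\in P$ conjugate to $c$. The approach is to view $\F_{2i+7}$ as a graph of groups with central vertex $H$ and leaves $\langle e_{2j+2},e_{2j+3}\rangle$ attached via edge groups $\langle[e_{2j+2},e_{2j+3}]\rangle$ for $j=1,\dots,i+2$, and to apply the conjugacy theorem for such decompositions. The alternative to $d_1,d_2$ being conjugate in $H$ forces $d_1$ into some edge stabilizer: for $j\le i$ this directly yields $d_1\in P$, while the remaining edges $\langle c_1\rangle,\langle c_2\rangle$ are ruled out by $H$-normal form considerations combined with the constraint $d_1\in Q_1$ (and symmetrically for $d_2\in Q_2$). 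Once $d_1,d_2$ are conjugate inside $H=P*\langle c_1\rangle*\langle c_2\rangle$, the conjugacy theorem for free products (a special case of Theorem \ref{NormConjForm}) compares cyclically reduced $H$-forms: $d_1$'s form contains no $c_2$, while $d_2$'s form contains $c_2$ unless $d_2\in P$, so the cyclic permutation matching forces $d_2\in P$; length-$1$ free-product conjugacy then yields $d_1\in P$ with $d_1\sim d_2$ in $P$, so $c$ is conjugate in $\F_{2i+7}$ to an element of $P$, as required.
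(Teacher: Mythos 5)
Your proof is correct in outline and follows the paper's overall strategy: after invoking an elimination theorem to pass to basic-sort representatives, and Lemmas \ref{BasicAlgebraicClosure2} and \ref{BasicAlgebraicClosure4} to localize those representatives to $Q_1$ and $Q_2$, everything reduces to showing $Q_1\cap Q_2=P$ together with a conjugacy analogue. Your handling of the real and coset sorts essentially matches the paper's; the paper establishes $Q_1\cap Q_2=P$ by normal forms in the two-factor splitting $\F_{2i+7}=\langle e_1,\ldots,e_{2i+3}\rangle*\langle e_{2i+4},\ldots,e_{2i+7}\rangle$, while you use the finer $H=P*\langle c_1\rangle*\langle c_2\rangle$, and these are equivalent. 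Where you genuinely diverge is the conjugacy sort. The paper's key observation is that a cyclically reduced form of $d_1$ with respect to the inherited splitting $P*\langle c_1\rangle$ of $Q_1$ (and of $d_2$ with respect to $P*\langle c_1c_2\rangle$ of $Q_2$) is automatically cyclically reduced with respect to the two-factor splitting of $\F_{2i+7}$, so Theorem \ref{NormConjForm} applies directly in $\F_{2i+7}$: $d_1$ and $d_2$ are cyclic permutations of one another (or both of length one), and matching a $\langle c_1\rangle$-syllable of $d_1$ against a $\langle c_1c_2\rangle$-syllable of $d_2$ (whose intersection is trivial) forces $d_1\in P$. Your detour through the star-shaped graph of groups with central vertex $H$ instead requires a conjugacy theorem for iterated amalgams that the paper never states, and the step ruling out the edges $\langle c_1\rangle,\langle c_2\rangle$ is slightly mislabeled: since $c_1\in Q_1$, the constraint $d_1\in Q_1$ does not exclude $d_1$ from being conjugate into $\langle c_1\rangle$; it is $d_2\in Q_2$ that does. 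Your argument can be completed, but the paper's single-splitting route is more direct and avoids the graph-of-groups machinery entirely.
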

\begin{proof}
We denote by $A$ (respectively $B$) the group $\langle e_1,e_2,e_3,[e_4,e_5],\ldots,[e_{2i+4},e_{2i+5}]\rangle$
(respectively $\langle e_1,e_2,e_3,[e_4,e_5],\ldots,[e_{2i+4},e_{2i+5}][e_{2i+6},e_{2i+7}]\rangle$). Let $\gamma\in acl^{eq}(A)\cap acl^{eq}(B)$. 
Then there exists 
an equivalence relation $E$ and a tuple $\bar{c}$ consisting of elements of $\F_{2i+7}$ such that $\gamma=[\bar{c}]_E$.  
Thus, by Theorem \ref{Elim}, we have that $R_E(\bar{c},\F_{2i+7}^{eq})=\{\bar{\gamma}_1,\ldots,\bar{\gamma}_k\}$. Note 
that each $\bar{\gamma}_i$ is algebraic over $\F_2\gamma$, thus they all belong to $acl^{eq}(A)\cap acl^{eq}(B)$. 
Now let $\beta$ be an element of the tuple $\bar{\gamma}_i$ for some $i\leq k$. We take cases for $\beta$. 
\begin{itemize}
 \item[(i)] Suppose $\beta\in \F_{2i+7}$. Then by Lemmata \ref{BasicAlgebraicClosure2},\ref{BasicAlgebraicClosure4} we have that 
 $\beta\in A\cap B$. Consider the free splitting of $\F_{2i+7}$ as 
 $\langle e_1,e_2,\ldots,e_{2i+3}\rangle*\langle e_{2i+4},\ldots,e_{2i+7}\rangle$. 
 Let $c_1b_1\ldots c_nb_nc_{n+1}$ be the normal form for $\beta$ with respect to this splitting of $\F_{2i+7}$ where   
 $c_i\in \langle e_1,e_2,\ldots,e_{2i+3}\rangle$ and $b_i\in\langle e_{2i+4},\ldots,e_{2i+7}\rangle$ for $i\leq n$. 
 Since $\beta\in A\cap B$ we must have that 
 $b_i\in \langle [e_{2i+4},e_{2i+5}]\rangle\cap\langle [e_{2i+4},e_{2i+5}][e_{2i+6},e_{2i+7}]\rangle$, 
 but then the $b_i$'s are trivial and $\beta\in \langle e_1,e_2,e_3,\ldots,$ $[e_{2i+2},e_{2i+3}]\rangle$ as we wanted.
 
 \item[(ii)] Suppose $\beta=[c]_E$ for $E=E_1$. Then by Lemmata \ref{BasicAlgebraicClosure2},\ref{BasicAlgebraicClosure4},  
 $c$ can be conjugated to an element in $A$ and to an element in $B$. Consider the free splitting of $\F_{2i+7}$ as 
 $\langle e_1,e_2,\ldots,e_{2i+3}\rangle*\langle e_{2i+4},\ldots,e_{2i+7}\rangle$. Note that $A$ (respectively $B$) 
 inherits the following free splitting $\langle e_1,e_2,e_3,[e_4,e_5]\ldots,$ $[e_{2i+2},e_{2i+3}]\rangle*\langle [e_{2i+4},e_{2i+5}]\rangle$ 
 (respectively $\langle e_1,e_2,e_3,$ $[e_4,e_5]\ldots,$ $[e_{2i+2},e_{2i+3}]\rangle*\langle [e_{2i+4},e_{2i+5}][e_{2i+6},e_{2i+7}]\rangle$) from 
 the above splitting of $\F_{2i+7}$. 
 But, any cyclically reduced form with respect to the free splitting of $A$ (respectively $B$) is a cyclically reduced form 
 with respect to the free splitting of $\F_{2i+7}$. Thus, if $c_A:=c_1b_1\ldots c_nb_n$ (respectively $c_B:=d_1f_1\ldots d_mf_m$) 
 is the conjugate of $c$ in $A$ (respectively $B$), $c_A$ is a cyclic permutation of $c_B$, and as 
 in Case (i), we have that $c_A$ belongs to $\langle e_1,e_2,e_3,\ldots,[e_{2i+2},e_{2i+3}]\rangle$, thus $c$ can be conjugated 
 to $\langle e_1,e_2,e_3,\ldots,[e_{2i+2},$ $e_{2i+3}]\rangle$.

 \item[(iii)] Suppose $\beta=[(c_1,c_2)]_E$ for $E=E_{2_m}$. Then  by Lemmata \ref{BasicAlgebraicClosure2},\ref{BasicAlgebraicClosure4} 
 we have that there exist $c_{21}\in A$ and $c_{22}\in B$ 
 such that $C(c_2)=C(c_{21})=C(c_{22})$. Thus there are $c$ and $k_1,k_2,k_3\in\Z$ such that $c^{k_1}=c_2$, $c^{k_2}=c_{21}$, and $c^{k_3}=c_{22}$. 
 But since $A,B$ are closed under taking roots, we have that $c\in A\cap B$. Thus, as in case (i), we have that 
 $c\in \langle e_1,e_2,e_3,\ldots,[e_{2i+2},e_{2i+3}]\rangle$ and so is $c_2$. 
 
 Again by Lemmata \ref{BasicAlgebraicClosure2},\ref{BasicAlgebraicClosure4} we have that there exist $c_{11}\in A$ and $c_{12}\in B$ 
 such that $c_1\cdot C(c_2)^m=c_{11}\cdot C(c_2)^m=c_{12}\cdot C(c_2)^m$. Therefore, $c_{11}^{-1}c_1\in C(c_2)^m$ (respectively 
 $c_{12}^{-1}c_1\in C(c_2)^m$), thus $c_1\in A\cap B$ and as in Case (i) we have that    
 $c_1\in\langle e_1,e_2,e_3,\ldots,[e_{2i+2},e_{2i+3}]\rangle$, as we wanted.
 \item[(iv)] Suppose $\beta=[(c_1,c_2,c_3)]_E$ for $E=E_{4_{m,n}}$. The proof is identical to the previous case.
\end{itemize}
This shows that $\gamma$ is in $acl^{eq}(e_1,e_2,a_0,\ldots,a_i)$ and the proof is concluded.
\end{proof}

Putting everything together, Propositions \ref{D1},\ref{D2},\ref{D3}, and \ref{D4} show that the sequence $(a_i)_{i<\omega}$ witnesses 
$n$-ampleness in the theory of non abelian free groups for any $n<\omega$.

\begin{thm}
$T_{fg}$ is $n$-ample for any $n<\omega$.
\end{thm}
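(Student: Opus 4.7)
The plan is to verify each of the four clauses of Definition \ref{Ample} for the explicit sequence $a_0, a_1, \ldots, a_n$, working over the parameter set $\{e_1, e_2\}$. Each clause corresponds to exactly one of the four propositions already established, so the theorem amounts to assembling them.

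First I would note that, after adding $e_1, e_2$ as parameters, clause (1) of Definition \ref{Ample} (that $a_0$ forks with $a_n$ over $\emptyset$) is precisely Proposition \ref{D1}, and clause (2) (non-forking of $a_{i+1}$ with $a_0, \ldots, a_{i-1}$ over $a_i$ for $1 \leq i < n$) is Proposition \ref{D2}. Next, clause (3), asserting $acl^{eq}(a_0) \cap acl^{eq}(a_1) = acl^{eq}(\emptyset)$ over the parameters, is exactly Proposition \ref{D3}. Finally, clause (4) for any $1 \leq i < n$ is Proposition \ref{D4}. Since $n$ was arbitrary, this yields $n$-ampleness of $T_{fg}$ for every $n < \omega$.

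The real substance has already been absorbed into the four propositions. The main obstacle was clause (4): verifying the nested algebraic closure identity required the normal-form analysis carried out in Lemmas \ref{BasicAlgebraicClosure2} and \ref{BasicAlgebraicClosure4}, which in turn rested on the infinite-orbit properties of pseudo-Anosov homeomorphisms recorded in Theorems \ref{Pseudo} and \ref{PseudoNonOrien}. Clauses (1) and (2) are much softer: clause (1) uses that a primitive element of a free group maps to a primitive element in the abelianization, combined with Theorem \ref{GenPil}, while clause (2) follows from the standard criterion that a suitable free factorization witnesses forking-independence (Theorem \ref{ForkPil}).
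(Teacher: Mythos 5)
Your proposal is correct and matches the paper's own (one-line) proof exactly: the theorem is obtained by combining Propositions \ref{D1}, \ref{D2}, \ref{D3}, and \ref{D4}, which verify the four clauses of Definition \ref{Ample} for the sequence $(a_i)$ after adding $e_1, e_2$ as parameters. Your summary of where the real work lies (Lemmas \ref{BasicAlgebraicClosure2} and \ref{BasicAlgebraicClosure4} via Theorems \ref{Pseudo} and \ref{PseudoNonOrien} for clause (4), and the softer Theorems \ref{GenPil} and \ref{ForkPil} for clauses (1) and (2)) is also accurate.
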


Our method produces a family of witnessing examples to ampleness.  
One can easily see that for each $k\geq 1$ a sequence of the form: 

$$a_0=e_3$$
$$a_{i+1}=a_i[e_{2ki+3+1},e_{2ki+3+2}][e_{2ki+3+3},e_{2ki+3+4}]\dots[e_{2ki+3+(2k-1)},e_{2ki+3+2k}]$$

witnesses $n$-ampleness for any $n<\omega$. And the same is true for the following ``non-orientable'' witnessing 
family of sequences (for $k\geq 3$): 

$$a_0=e_3$$
$$a_{i+1}=a_ie^2_{ki+3+1}e^2_{ki+3+2}\ldots e^2_{ki+3+k-1}e^2_{ki+3+k}$$

\begin{rmk} 
Let us also remark that there is no harm starting the recursive definition of our witnessing sequence with $a_0=e_1$. 
The point of not doing so is that we prefer to use Theorem \ref{Elim}, instead of Theorem \ref{WeakElim}.
\end{rmk}

\appendix
\section{Appendix}
We show that the ``basic'' equivalence relation induced by conjugation (see Definition \ref{Imaginaries}) cannot be (geometrically) eliminated 
(i.e. there exists an equivalence class which is not interalgebraic with any finite ``real'' tuple) in the theory of the free group,  
strengthening Theorem 2.1 in \cite{SelIm}. 
We also show that the ``basic'' equivalence relations $E_{2_m},E_{3_m}, E_{4_{m,n}}$ in Definition \ref{Imaginaries} cannot be eliminated 
giving alternative proofs of Theorems 2.2 and 2.3 of the same preprint mantioned above. In comparison with Sela's proofs, that uses  
the existence and properties of ``Diophantine envelopes'' for definable sets, we use the 
(also highly non-trivial) result (see \cite{Sel6,KharlampovichMyasnikov}) that the following chain of groups is elementary:

$$\F_2\prec\F_3\prec\ldots\prec\F_n\prec\ldots$$

\begin{thm}\label{Conjugation}
Fix $n\geq 2$. Then for any finite tuple $\bar{a}\in \F_{n+1}$, we have that $[e_{n+1}]_{E_1}$ is not interalgebraic with $\bar{a}$ over $\F_n$. 
\end{thm}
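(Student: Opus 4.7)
The plan is to proceed by contradiction, assuming some $\bar{a}\in\F_{n+1}$ is interalgebraic with $[e_{n+1}]_{E_1}$ over $\F_n$. The argument splits into two independent steps: first, showing that $[e_{n+1}]_{E_1}\notin acl^{eq}(\F_n)$; second, using a one-parameter family of automorphisms of $\F_{n+1}$ that fix $\F_n$ pointwise and stabilize the conjugacy class $[e_{n+1}]_{E_1}$ to force $\bar{a}$ to lie in $\F_n^{|\bar{a}|}$. These two facts combine immediately into the desired contradiction.

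For Step 1, I would exploit the elementary chain $\F_n\prec\F_{n+1}\prec\F_{n+2}\prec\cdots\prec\mathbb{M}$. If $[e_{n+1}]_{E_1}$ were in $acl^{eq}(\F_n)$, witnessed by a formula $\phi(x,\bar{c})$ with $\bar{c}\in\F_n$ having exactly $N$ $E_1$-classes of solutions in $\mathbb{M}$, then by elementarity $\phi(x,\bar{c})$ has exactly $N$ $E_1$-classes of solutions in every $\F_{n+k}$. But inside $\F_{n+k}$, for each $j=1,\dots,k$, the basis permutation swapping $e_{n+1}$ with $e_{n+j}$ and fixing every other basis element is an automorphism of $\F_{n+k}$ fixing $\bar{c}$, so $\phi(e_{n+j},\bar{c})$ also holds. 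Since the $e_{n+j}$'s are pairwise non-conjugate (each is a single letter in a free basis), we obtain at least $k$ distinct $E_1$-classes of solutions. Taking $k>N$ is a contradiction.

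For Step 2, for each $u\in\F_n$ the assignment $e_i\mapsto e_i$ ($i\leq n$), $e_{n+1}\mapsto ue_{n+1}u^{-1}$ extends to an automorphism $\sigma_u$ of $\F_{n+1}$ (since the target tuple still generates $\F_{n+1}$). By construction $\sigma_u$ fixes $\F_n$ pointwise and fixes the class $[e_{n+1}]_{E_1}$. Hence $\sigma_u(\bar{a})$ and $\bar{a}$ realize the same type over $\F_n\cup\{[e_{n+1}]_{E_1}\}$ in $\F_{n+1}$, and by elementarity in $\mathbb{M}$. If $\bar{a}\in acl^{eq}(\F_n,[e_{n+1}]_{E_1})$, the orbit $\{\sigma_u(\bar{a}):u\in\F_n\}$ must therefore be finite. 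Inspecting reduced forms with respect to the free product $\F_{n+1}=\F_n*\langle e_{n+1}\rangle$, the substitution $e_{n+1}\mapsto ue_{n+1}u^{-1}$ enlarges the $\F_n$-syllables flanking each occurrence of $e_{n+1}^{\pm 1}$ and produces distinct reduced words for distinct $u\in\F_n$ whenever the original word contains at least one $e_{n+1}^{\pm 1}$. Therefore every component of $\bar{a}$ must lie in $\F_n$.

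Combining the two steps: $\bar{a}\in\F_n^{|\bar{a}|}$ gives $acl^{eq}(\F_n,\bar{a})=acl^{eq}(\F_n)$, and the other half of interalgebraicity then forces $[e_{n+1}]_{E_1}\in acl^{eq}(\F_n)$, contradicting Step 1. The only non-routine verification is the injectivity of $u\mapsto\sigma_u(a)$ for $a\in\F_{n+1}\setminus\F_n$, which reduces to the normal form theorem for $\F_n*\langle e_{n+1}\rangle$ together with a small case split depending on whether the first and last syllables of $a$ in its reduced form belong to $\F_n$ or to $\langle e_{n+1}\rangle$.
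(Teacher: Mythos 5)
Your proposal is correct and follows essentially the same route as the paper: the same two families of automorphisms (conjugation of $e_{n+1}$ by elements of $\F_n$, and basis shifts $e_{n+1}\mapsto e_{n+i}$ inside the elementary chain $\F_n\prec\F_{n+1}\prec\cdots$) and the same normal-form argument for the free product $\F_n*\langle e_{n+1}\rangle$. The only differences are cosmetic: you reorganize the contradiction into two symmetric steps and work out the counting argument for $[e_{n+1}]_{E_1}\notin acl^{eq}(\F_n)$ more explicitly than the paper's parenthetical remark, and you vary over all $u\in\F_n$ where the paper takes powers of a single $b$ (and in fact the injectivity of $u\mapsto\sigma_u(a)$ follows directly from uniqueness of normal forms --- the first $\F_n$-syllable of $\sigma_u(a)$ is $w_0 u$ --- so no case split on the end syllables is actually needed).
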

\begin{proof}
Suppose for the sake of contradiction that there is $\bar{a}\in\F_{n+1}$ such that $[e_{n+1}]_{E_1}$ is 
interalgebraic with $\bar{a}$ over $\F_n$. It is not hard to see that $\bar{a}\in\F_{n+1}\setminus\F_n$ 
(otherwise we can fix $\bar{a}$ and send $e_{n+1}$ to $e_{n+i}$ for $1<i<\omega$).\\
{\em Claim:} Let $b\in\F_n$ and $f_b\in Aut_{\F_n}(\F_{n+1})$ be defined by $f_b(e_{n+1})=e_{n+1}^b$. Then 
$\bar{a}$ has infinite orbit under $\langle f^l_b | l<\omega \rangle$. \\ 
{\em Proof of Claim:} Let $c=e_{n+1}^{i_1}w_1(e_1,\ldots,e_n)e_{n+1}^{i_2}\ldots 
e_{n+1}^{i_k}w_k(e_1,\ldots,e_n)e_{n+1}^{i_{k+1}}$ be the normal form of an element in the tuple $\bar{a}$ which is moreover  
in $\F_{n+1}\setminus\F_n$ with respect to $\F_n*\langle e_{n+1}\rangle$. Then $f_b^l(c)=b^le_{n+1}^{i_1}b^{-l}w_1(e_1,\ldots,$ $e_n)b^le_{n+1}^{i_2}b^{-l}\ldots 
b^le_{n+1}^{i_k}b^{-l}w_k(e_1,\ldots,e_n)$ $b^le_{n+1}^{i_{k+1}}b^{-l}$
and the claim follows easily.
 
Now, since $f^l_b$ fixes the conjugacy class of $e_{n+1}$, we have $\bar{a}\not\in acl^{eq}(\F_n, [e_{n+1}]_{E_1})$, a contradiction.
  
\end{proof}

We continue by proving that no basic imaginary can be eliminated.

\begin{thm}
Fix $n\geq 2$. Let $E$ be a basic equivalence relation (see Definition \ref{Imaginaries}). 
Then there exists an equivalence class $[\bar{b}]_E$ such that for any $\bar{a}\in\F_{\omega}$, 
we have that $[\bar{b}]_E$ and $\bar{a}$ are not interdefinable over $\F_n$.
\end{thm}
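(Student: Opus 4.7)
The plan is to handle $E=E_1$ by directly invoking Theorem \ref{Conjugation}, since non-interalgebraicity over $\F_n$ is stronger than non-interdefinability over $\F_n$, and to treat each of $E_{2_m}$, $E_{3_m}$ and $E_{4_{m,n}}$ by the same template. For each such $E$ I will fix an explicit class $[\bar b]_E$ with representative just past $\F_n$, and, given a hypothetical tuple $\bar a\in\F_\omega$ interdefinable with $[\bar b]_E$ over $\F_n$, produce an automorphism of $\F_\omega$ fixing $\F_n$ pointwise that either fixes the class while moving $\bar a$ through infinitely many values, or fixes $\bar a$ while sending the class to infinitely many distinct classes. Either alternative refutes interalgebraicity (hence interdefinability) over $\F_n$. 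Lifting such automorphisms from $\F_\omega$ to a saturated elementary extension, in order to read off algebraic closure properly, is done via the elementarity of the chain $\F_2\prec\F_3\prec\cdots$, exactly as in the proof of Theorem \ref{Conjugation}.

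For $E_{2_m}$ I take $\bar b=(e_{n+1},e_{n+2})$, so the class records the centralizer $\langle e_{n+2}\rangle$ together with the left coset $e_{n+1}\langle e_{n+2}^m\rangle$. Two families of class-fixing automorphisms are used: first, the twists $f_k: e_{n+1}\mapsto e_{n+1}e_{n+2}^{km}$ fixing every other generator; and second, any automorphism of $\F_\omega$ that fixes $\F_{n+2}$ pointwise and permutes the tail $e_{n+3},e_{n+4},\ldots$. Writing the hypothetical witness as $\bar a\in\F_N$: if $\bar a$ involves some generator $e_j$ with $j>n+2$ then the second family moves $\bar a$ infinitely often; otherwise $\bar a\in\F_{n+2}$, and if $\bar a$ involves $e_{n+1}$ non-trivially then the normal-form analysis in $\langle\F_n,e_{n+2}\rangle\ast\langle e_{n+1}\rangle$ from the claim inside Theorem \ref{Conjugation} forces $\bar a$ to have infinite orbit under the $f_k$'s. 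The one remaining subcase, $\bar a\in\langle\F_n,e_{n+2}\rangle$, is handled by reversing the roles: for each $l$ the swap $e_{n+1}\leftrightarrow e_{n+3+l}$ fixes $\F_n$ and $\bar a$ pointwise but sends $[\bar b]_{E_{2_m}}$ to the pairwise distinct classes $[(e_{n+3+l},e_{n+2})]_{E_{2_m}}$. The case $E_{3_m}$ is completely symmetric, replacing the twist by $e_{n+1}\mapsto e_{n+2}^{km}e_{n+1}$.

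For $E_{4_{m,n}}$ I take $\bar b=(e_{n+1},e_{n+2},e_{n+3})$, so that the class records the centralizers $\langle e_{n+1}\rangle$, $\langle e_{n+3}\rangle$ together with the double coset $\langle e_{n+1}^m\rangle\cdot e_{n+2}\cdot\langle e_{n+3}^n\rangle$. The twist family becomes $f_{k,l}:e_{n+2}\mapsto e_{n+1}^{km}e_{n+2}e_{n+3}^{ln}$ fixing all other generators, and the whole case analysis parallels the $E_{2_m}$ one, with the fall-back swap now being $e_{n+2}\leftrightarrow e_{n+4+l}$ for tuples $\bar a\in\langle\F_n,e_{n+1},e_{n+3}\rangle$. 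The main and essentially only technical obstacle throughout is the normal-form verification that the twist families really do yield infinite orbits on every tuple $\bar a$ involving the ``middle'' generator; this is the same bookkeeping as in the proof of Theorem \ref{Conjugation}, merely transcribed into the new free-product decompositions.
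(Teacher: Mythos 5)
Your proposal is correct, but it takes a genuinely different (and heavier) route than the paper. After reducing to $\bar a\in\F_{n+2}\setminus\F_{n+1}$ exactly as you do, the paper dispatches $E_{2_m}$ with a \emph{single} automorphism, $\sigma:e_{n+2}\mapsto e_{n+2}^{-1}$ fixing $\F_{n+1}$ pointwise: this fixes the class $[(e_{n+1},e_{n+2})]_{E_{2_m}}$ because $\langle e_{n+2}^{-m}\rangle=\langle e_{n+2}^m\rangle$, and it moves $\bar a$ because the fixed subgroup of $\sigma$ in $\F_{n+2}=\F_{n+1}*\langle e_{n+2}\rangle$ is exactly $\F_{n+1}$ (the $\pm1$-trick on normal forms). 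For $E_{4_{m,n}}$ the paper chooses the class $[(e_{n+1},e_{n+2},e_{n+1})]$, reusing $e_{n+1}$ in the centralizer slots so the argument again comes down to a single inversion (this time $e_{n+1}\mapsto e_{n+1}^{-1}$) with the same one-line normal-form observation. Your version replaces this with the twist families $e_{n+1}\mapsto e_{n+1}e_{n+2}^{km}$ (resp.\ $e_{n+2}\mapsto e_{n+1}^{km}e_{n+2}e_{n+3}^{ln}$) plus tail permutations plus swap-with-a-fresh-generator, and you take $(e_{n+1},e_{n+2},e_{n+3})$ with three fresh generators rather than the paper's two. What your approach buys: it actually yields non-\emph{interalgebraicity} (infinite orbits on whichever side is problematic), so it is the natural way to extend the strengthened Theorem~\ref{Conjugation} to the coset imaginaries; and the twist $e_{n+2}\mapsto e_{n+1}^{km}e_{n+2}e_{n+3}^{ln}$ more visibly parallels the definition of the double-coset relation. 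What the paper's route buys: a two-line proof where almost everything reduces to the fact that inverting a free generator has the obvious fixed subgroup, with no additional case split and no normal-form computations beyond what Theorem~\ref{Conjugation} already did. Both correctly use the elementary chain $\F_2\prec\F_3\prec\cdots$ (and hence $\F_k\prec\mathbb{M}$) to transfer automorphisms of the relevant $\F_k$ into facts about $\mathrm{dcl}^{eq}$ in the monster; your deferred ``normal-form bookkeeping'' is routine and is indeed the same calculation as in the claim inside Theorem~\ref{Conjugation}.
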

\begin{proof}
We take cases according to the basic equivalence relation $E$.
\begin{itemize}
\item[(i)] Let $E=E_1$. Then the result follows from Theorem \ref{Conjugation};
\item[(ii)] Let $E=E_{2_m}$. Then we consider the class $[(e_{n+1},e_{n+2})]_E$, which is determined by the left coset $e_{n+1}\cdot C_{\F_{n+2}}^m(e_{n+2})$. 
Suppose, for the sake of contradiction, that there is $\bar{a}\in\F_{\omega}$ such that $[(e_{n+1},e_{n+2})]_E$ is interdefinable with  
$\bar{a}$ over $\F_n$. Then as in the proof of Theorem \ref{Conjugation} 
we must have that $\bar{a}\in\F_{n+2}\setminus\F_{n+1}$. But then the automorphism of $\F_{n+2}$ 
fixing $\F_{n+1}$ and sending $e_{n+2}\mapsto e_{n+2}^{-1}$ fixes $[(e_{n+1},e_{n+2})]_E$ and moves 
$\bar{a}$, a contradiction;
\item[(iii)] Let $E=E_{4_{m,n}}$. Then we consider the class $[(e_{n+1},e_{n+2},e_{n+1})]_E$ and the result follows as above. 
\end{itemize}
\end{proof}

\end{document}